\let\OLDthebibliography\thebibliography
\renewcommand\thebibliography[1]{
  \OLDthebibliography{#1}
  \setlength{\parskip}{0pt}
  \setlength{\itemsep}{0pt}
}
\newcommand{\spleq}[2]{\begin{equation}\label{#1}\begin{split}#2\end{split}\end{equation}}
\newcommand{\spleqno}[1]{\begin{equation*}\begin{split}#1\end{split}\end{equation*}}
\newcommand{\para}[1]{\vspace{3mm} \noindent\textbf{#1.}} 
\newtheorem{The}{Theorem}[section]
\newtheorem{Lem}[The]{Lemma}
\newtheorem{Rem}[The]{Remark} 
\newtheorem{Def}[The]{Definition}
\journal{arXiv}
\begin{document}

\begin{frontmatter}

\title{An inverse problem for the fractional Schr\"odinger equation in a magnetic field}

\author{Giovanni Covi}
\address{Department of Mathematics, University of Jyv\"askyl\"a, Finland}
\address{\emph{\texttt{giovanni.g.covi@jyu.fi}}}

\begin{abstract}
This paper shows global uniqueness in an inverse problem for a fractional magnetic Schr\"odinger equation (FMSE): an unknown electromagnetic field in a bounded domain is uniquely determined up to a natural gauge by infinitely many measurements of solutions taken in arbitrary open subsets of the exterior. The proof is based on Alessandrini's identity and the Runge approximation property, thus generalizing some previous works on the fractional Laplacian. Moreover, we show with a simple model that the FMSE relates to a long jump random walk with weights.
\end{abstract}

\begin{keyword}
Fractional magnetic Schr\"odinger equation \sep Non-local operators \sep Inverse problems \sep Calder\'on problem 
\MSC[2010] 35R11, 35R30
\end{keyword}

\end{frontmatter}

\section{Introduction} 

This paper studies a fractional version of the Schr\"odinger equation in a magnetic field, or a fractional magnetic Schr\"odinger equation (FMSE), establishing a uniqueness result for a related inverse problem. We thus deal with a non-local counterpart of the classical magnetic Schr\"odinger equation (MSE) (see \cite{NSU95}), which requires to find up to gauge the scalar and vector potentials existing in a medium from voltage and current measurements on its boundary.

Let $\Omega\subset\mathbb R^n$ be a bounded open set with Lipschitz boundary, representing a medium containing an unknown electromagnetic field. The solution of the Dirichlet problem for the MSE is a function $u$ satisfying 
$$\left\{\begin{array}{lr}
        (-\Delta)_A u + qu := -\Delta u -i \nabla\cdot(Au) -i A\cdot\nabla u + (|A|^2+q)u =0 & \text{in } \Omega\\
        u=f & \text{on } \partial\Omega
        \end{array}\right. \;, $$

\noindent where $f$ is the prescribed boundary value and $A,q$ are the vector and scalar potentials in the medium. The boundary measurements are encoded in $\Lambda_{A,q} : H^{1/2}(\partial\Omega)\rightarrow H^{-1/2}(\partial\Omega)\;,$ the Dirichlet-to-Neumann (or DN) map. The inverse problem consists in finding $A, q$ in $\Omega$ up to gauge by knowing $\Lambda_{A,q}$.

The study of the local MSE has both mathematical and practical interest, since it constitutes a substantial generalization of the Calder\'on problem (see \cite{Ca80}). This problem first arose for the prospection of the ground in search of valuable minerals. In the method known as Electrical Impedance Tomography (EIT), electrodes are placed on the ground in order to deliver voltage and measure current flow; the resulting data carries information about the conductivity of the materials underground, allowing deductions about their composition (\cite{Uh09}). A similar method is also used in medical imaging. Since the tissues of a body have different electrical conductivities (\cite{Jo98}), using the same setup harmless currents can be allowed to flow in the body of a patient, thus collecting information about its internal structure. This technique can be applied to cancer detection (\cite{GZ03}), monitoring of vital functions (\cite{CGIN90}) and more (see e.g. \cite{Ho05}). Various engineering applications have also been proposed. A recent one (see \cite{HPS14}) describes a sensing skin consisting of a thin layer of conductive copper paint applied on concrete. In case of cracking of the block, the rupture of the surface would result in a local decrease in conductivity, which would in turn be detected by EIT, allowing the timely substitution of the failing block. The version of the problem with non-vanishing magnetic field is interesting on its own, as it is related to the inverse scattering problem with a fixed energy (see \cite{NSU95}). First order terms also arise by reduction in the study of numerous other inverse problems, among which isotropic elasticity (\cite{NU94}), special cases of Maxwell and Schr\"odinger equations (\cite{Mc00}, \cite{Es01}), Dirac equations (\cite{NT00}) and the Stokes system (\cite{HLW06}). The survey \cite{Sa07} contains more references on inverse boundary value problems for the MSE.

Below we introduce a fractional extension of the local problem. Fractional mathematical models are nowadays quite common in many different fields of science, including image processing (\cite{GO08}), physics (\cite{DGLZ2012}, \cite{Er02}, \cite{GL97}, \cite{La00}, \cite{MK00}, \cite{ZD10}), ecology (\cite{Hu10}, \cite{MV18}, \cite{RR09}), turbulent fluid dynamics (\cite{Co06}, \cite{DG13}) and mathematical finance (\cite{AB88}, \cite{Le04}, \cite{Sc03}). For more references, see \cite{BV18}. The common idea in these applications is that the fractional Schr\"odinger equation usefully describes anomalous diffusion, i.e. a diffusion process in which the mean squared displacement does not depend linearly on time. We expect this to be even more the case for FMSE, given its greater generality.

For the fractional case, fix $s\in(0,1)$, and consider the fractional divergence and gradient operators $(\nabla\cdot)^s$ and $\nabla^s$. These are based on the theoretical framework laid down in \cite{DGLZ2012}, \cite{DGLZ2013}, and were introduced in \cite{Co18} as non-local counterparts of the classical divergence and gradient. Fix a vector potential $A$, and consider the magnetic versions $(\nabla\cdot)^s_A$ and $\nabla^s_A$ of the above operators. These correspond to $(-i\nabla+A)\cdot$ and $(-i\nabla+A)$, whose combination results in the local magnetic Laplacian $(-\Delta)_A$. Analogously, we will show how $(\nabla\cdot)^s_A$ and $\nabla^s_A$ can be combined in a fractional magnetic Laplacian $(-\Delta)^s_A$.

\noindent The next step will be setting up the Dirichlet problem for FMSE as 
$$\left\{\begin{array}{lr}
        (-\Delta)^s_A u + qu =0 & \text{in } \Omega\\
        u=f & \text{in } \Omega_e
        \end{array}\right. \;.
				$$

Since our operators are non-local, the exterior values are taken over $\Omega_e=\mathbb R^n\setminus \overline\Omega$. The well-posedness of the direct problem is granted by the assumption that $0$ is not an eigenvalue for the left hand side of FMSE (see e.g. \cite{RS2017}). We can therefore define the DN map $\Lambda_{A,q}^s : H^s(\Omega_e) \rightarrow (H^{s}(\Omega_e))^*$ from the bilinear form associated to the equation. The inverse problem is to recover $A$ and $q$ in $\Omega$ from $\Lambda_{A,q}^s$. Because of a natural gauge $\sim$ enjoyed by FMSE, solving the inverse problem completely is impossible; however, the gauge class of the solving potentials can be fully recovered:

\begin{The}
Let $\Omega \subset \mathbb R^n, \; n\geq 2$ be a bounded open set, $s\in(0,1)$, and let $(A_i,q_i) \in \cal P$ for $i=1,2$. Suppose $W_1, W_2\subset \Omega_e$ are open sets, and that the DN maps for the FMSEs in $\Omega$ relative to $(A_1,q_1)$ and $(A_2,q_2)$ satisfy   $$\Lambda^s_{A_1,q_1}[f]|_{W_2}=\Lambda^s_{A_2,q_2}[f]|_{W_2}, \;\;\;\;\; \forall f\in C^\infty_c(W_1)\;.$$ \noindent Then $(A_1,q_1)\sim(A_2,q_2)$, that is, the potentials coincide up to the gauge $\sim$.  
\end{The}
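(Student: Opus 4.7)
The plan is to adapt the two-pronged strategy familiar from the fractional Calder\'on problem (Ghosh--Salo--Uhlmann style), combining an Alessandrini-type integral identity with a Runge approximation argument, and then to treat the first-order magnetic term by testing against carefully chosen pairs of solutions.

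First, starting from the bilinear forms associated with the two FMSEs with potentials $(A_i,q_i)$, I would derive an Alessandrini-type identity of the schematic shape
\[
\langle (\Lambda^s_{A_1,q_1}-\Lambda^s_{A_2,q_2})[f_1],\, f_2\rangle \;=\; \mathcal{I}(A_1-A_2,\,q_1-q_2;\,u_1,u_2),
\]
where $u_i$ solves the FMSE with potentials $(A_i,q_i)$ and exterior data $f_i$, and $\mathcal{I}$ is a bilinear expression in $u_1,u_2$ whose coefficients localize inside $\Omega$ and involve only the differences of the potentials. Under the hypothesis, the left side vanishes for all $f_i\in C^\infty_c(W_i)$, so $\mathcal{I}$ vanishes on every such solution pair.

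Next, I would invoke the Runge approximation property for the FMSE. The goal is to show that restrictions to $\Omega$ of solutions with exterior data supported in $W_i$ are dense in the natural space of solutions (or, dually, in a suitable function space on $\Omega$). This should follow via the standard Hahn--Banach argument, reducing density of the Poisson operator to an injectivity / antilocality property for the adjoint problem; the latter ultimately relies on the strong uniqueness continuation of the fractional Laplacian, which is the essential non-local ingredient absent in the classical MSE. Once Runge approximation is established, the identity $\mathcal{I}(A_1-A_2,q_1-q_2;u_1,u_2)=0$ extends from solution pairs with data in $W_1\times W_2$ to arbitrary solution pairs, and hence to a sufficiently rich class of test functions on $\Omega$.

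The final step, extracting $(A_1,q_1)\sim (A_2,q_2)$ from this universal vanishing identity, is where I expect the main obstacle. The bilinear form $\mathcal{I}$ contains a first-order magnetic contribution, coupling $A_1-A_2$ through the non-local operator $\nabla^s$, together with a zeroth-order contribution involving $q_1-q_2$ (and possibly $|A_1|^2-|A_2|^2$). One must separate these pieces, typically by rescaling or by choosing solutions that isolate one contribution at a time; and one must show that the pointwise information recoverable from the magnetic part is exactly the ambiguity encoded by the gauge $\sim$. Formulating and exploiting this gauge correctly in the fractional setting, where $A$ enters through $\nabla^s_A$ rather than through the classical gradient, is the genuinely new difficulty compared with the scalar fractional Calder\'on problem.
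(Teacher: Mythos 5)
Your skeleton---integral identity for the DN maps, Runge approximation via Hahn--Banach resting on unique continuation for the fractional Laplacian, then an extraction step---is exactly the route the paper takes, so the first two stages are sound in outline. One caveat even there: the WUCP for the \emph{magnetic} equation does not follow from that of $(-\Delta)^s$ for free; the paper needs the support hypothesis \emph{(p5)}, $\mathrm{supp}(A)\subseteq\Omega^2$, so that for $x$ in the exterior set $W$ the magnetic terms vanish identically and the equation literally reduces to $(-\Delta)^su=0$ there, after which the known UCP applies. Your proposal does not flag that this reduction needs an assumption on $A$.

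The genuine gap is the final step, which you explicitly leave as ``the main obstacle,'' and your schematic identity points in a slightly wrong direction: the identity does not involve the raw differences $A_1-A_2$ and $q_1-q_2$. By the weak formula of Lemma 3.3, $(-\Delta)^s_Au=(-\Delta)^su+2\int_{\mathbb R^n}A_{a\parallel}\cdot\nabla^su\,dy+\bigl((\nabla\cdot)^sA_{s\parallel}+\int_{\mathbb R^n}|A|^2dy\bigr)u$, the equation---hence the DN map---sees $A$ only through its antisymmetric-parallel part $A_{a\parallel}$ and the aggregate potential $Q=q+\int_{\mathbb R^n}|A|^2dy+(\nabla\cdot)^sA_{s\parallel}$, and the gauge $\sim$ is characterized precisely by $A_{a\parallel}=A'_{a\parallel}$ and $Q=Q'$ (Lemma 3.8). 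So no ``separation'' of $A$ from $q$ is needed or possible: one must recover exactly the pair $(A_{a\parallel},Q)$, and this is what the gauge encodes. The paper closes the argument with two concrete Runge choices you do not supply: first approximate $u_1|_\Omega\to 1$ in $L^2(\Omega)$, which annihilates the first-order term (it is built from differences $u_1(x)-u_1(y)$) and, against arbitrary $u_2|_\Omega\to f$, gives $Q_1=Q_2$ on $\Omega$; then, with the zeroth-order term gone, fix $x$ and approximate $u_1|_\Omega\to\psi(y)e^{-1/|x-y|}|x-y|^{n+2s}$, whose factor $|x-y|^{n+2s}$ cancels the singular kernel so that arbitrariness of $\psi$ forces $\sigma_1=\sigma_2$ on $\Omega^2$, i.e.\ $(A_1)_{a\parallel}=(A_2)_{a\parallel}$ via the representation $A_{a\parallel}=\alpha(\sigma-1)$ of Lemma 3.4. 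Property \emph{(p5)} is used again here, both to localize the identity to $\Omega$ and to extend $\sigma_1=\sigma_2$, $Q_1=Q_2$ from $\Omega$ to everywhere. Without the gauge characterization and these explicit test-function constructions, your outline does not yield the conclusion.
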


The set $\mathcal P$ of potentials and the gauge $\sim$ are defined in Section 3. $\cal P$ contains all potentials $(A,q)$ satisfying certain properties, among which \emph{(p5)}: supp$(A)\subseteq\Omega^2$. We suspect this assumption to be unnecessary, but we nonetheless prove our Theorem in this easier case, and highlight the occasions when \emph{(p5)} is used.

The proof is based on three preliminary results: the integral identity for the DN map, the weak unique continuation property (WUCP) and the Runge approximation property (RAP). The WUCP is easily proved by reducing our case to that of the fractional Laplacian $(-\Delta)^s$, for which the result is already known (see e.g. \cite{Ru15}, \cite{GSU2017}). For this we use \emph{(p5)}. The proof of the RAP then comes from the WUCP and the Hahn-Banach theorem. Eventually, we use this result, the integral identity and \emph{(p5)} to complete the proof by means of Alessandrini's identity. This technique generalizes the one studied in \cite{GSU2017}. 

We consider Theorem 1.1 to be very satisfactory, as gauges show up in the local case $s=1$ as well (again, see \cite{NSU95}). For comparison see \cite{CLR18}, where it is shown that no gauge exists for a certain MSE in which only the highest order term is non-local. This interesting result inspired us to investigate a fully fractional operator, and is thus the main academic motivation for this work.  


\section{Preliminaries}

\para{Operators on bivariate vector functions} 

\begin{Def}\label{def1} Let $A \in C^{\infty}_c(\mathbb R^{n}\times\mathbb R^{n},\mathbb C^n)$. The \emph{symmetric, antisymmetric, parallel and perpendicular parts of $A$} at points $x,y$ are $$ A_s(x,y):= \frac{A(x,y)+A(y,x)}{2}\,, \;\;\;\;\;\; A_a(x,y) := A(x,y)-A_s(x,y)\;, $$
$$A_\parallel(x,y) := \left\{
    \begin{array}{cc}
       \frac{A(x,y)\cdot(x-y)}{|x-y|^2}(x-y) & \mbox{if } x\neq y \\
       A(x,y)       & \mbox{if } x=y
    \end{array} \right.\,,\;\; A_\perp(x,y) := A(x,y)-A_\parallel(x,y)\;.$$

\noindent The \emph{$L^2$ norms of $A$ with respect to the first} and \emph{second variable} at point $x$ are $$ {\cal J}_1 A(x) := \left( \int_{\mathbb R^n} |A(y,x)|^2 \,dy \right)^{1/2}\;\;, \;\;\;\; {\cal J}_2 A(x) := \left( \int_{\mathbb R^n} |A(x,y)|^2 \,dy \right)^{1/2}\;.$$
\end{Def}

\begin{Rem}
Being $A\in C^\infty_c$, these two integrals are finite and the definitions make sense. Moreover, since $A_a \cdot A_s$ is an antisymmetric scalar function and $A_\parallel \cdot A_\perp = 0$, by the following computations
\begin{equation} \label{pitsim}
\begin{split}   
\|A\|_{L^2}^2 & = \|A_a + A_s\|_{L^2}^2 = \|A_a\|_{L^2}^2+\|A_s\|_{L^2}^2+2\langle A_a, A_s\rangle \\ & = \|A_a\|_{L^2}^2+\|A_s\|_{L^2}^2+2\int_{\mathbb R^{2n}}A_a\cdot A_s \,dx\,dy = \|A_a\|_{L^2}^2+\|A_s\|_{L^2}^2\;\;,
\end{split}
\end{equation}
\begin{equation} \label{pitort}
\begin{split}
\|A\|_{L^2}^2 & = \|A_\parallel + A_\perp\|_{L^2}^2 = \|A_\parallel\|_{L^2}^2+\|A_\perp\|_{L^2}^2+2\langle A_\parallel, A_\perp\rangle \\ & = \|A_\parallel\|_{L^2}^2+\|A_\perp\|_{L^2}^2+2\int_{\mathbb R^{2n}}A_\parallel\cdot A_\perp \,dx\,dy = \|A_\parallel\|_{L^2}^2+\|A_\perp\|_{L^2}^2\;\;
\end{split}
\end{equation}

\noindent the four operators $(\cdot)_s, (\cdot)_a, (\cdot)_\parallel, (\cdot)_\perp$ can be extended to act from $L^2(\mathbb R^{2n})$ to $L^2(\mathbb R^{2n})$. This is true of ${\cal J}_1A$ and ${\cal J}_1A$ as well:
\spleq{normpart}{
\| {\cal J}_1A \|^2_{L^2(\mathbb R^n)} & = \int_{\mathbb R^n} |({\cal J}_1A)(x)|^2 \,dx = 
\int_{\mathbb R^{2n}} |A(y,x)|^2 \,dy \,dx = \|A\|^2_{L^2(\mathbb R^{2n})}\;. }
\end{Rem}

\begin{Lem}\label{almevlem} The equalities defining $(\cdot)_s, (\cdot)_a, (\cdot)_\parallel, (\cdot)_\perp$ in Definition \ref{def1} for $A\in C^\infty_c$ still hold a.e. for $A\in L^2(\mathbb R^{2n})$. 
\end{Lem}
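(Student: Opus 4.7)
The plan is to argue by density of $C^\infty_c(\mathbb R^{2n},\mathbb C^n)$ in $L^2(\mathbb R^{2n},\mathbb C^n)$, exploiting the fact that each of the four operations admits two interpretations on $L^2$ (the unique continuous linear extension of the $C^\infty_c$ operator, and the direct pointwise formula), and showing they coincide.

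First I would verify that each of the four \emph{pointwise} formulas in Definition \ref{def1}, read as manipulations of $L^2$ functions rather than of smooth ones, defines a bounded linear operator on $L^2(\mathbb R^{2n})$. For $(\cdot)_s$ and $(\cdot)_a$ this is immediate, since the map $A(x,y)\mapsto A(y,x)$ is an isometry of $L^2(\mathbb R^{2n})$ by Fubini, so $A_s:=(A(x,y)+A(y,x))/2$ and $A_a:=A-A_s$ are bounded linear combinations of bounded operators. For $(\cdot)_\parallel$ and $(\cdot)_\perp$, the pointwise assignment is, for each fixed $(x,y)$ with $x\neq y$, the orthogonal projection of the vector $A(x,y)\in\mathbb C^n$ onto the real line $\mathbb R(x-y)$; such a projection is a contraction in $\mathbb C^n$, hence $|A_\parallel(x,y)|\leq |A(x,y)|$ a.e. and $A_\parallel\in L^2$ with $\|A_\parallel\|_{L^2}\leq \|A\|_{L^2}$. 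The set $\{x=y\}$ has zero measure in $\mathbb R^{2n}$, so the ambiguity there is irrelevant. Likewise $A_\perp=A-A_\parallel$ is bounded.

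Next, for $A\in L^2(\mathbb R^{2n})$ pick a sequence $A_k\in C^\infty_c(\mathbb R^{2n},\mathbb C^n)$ with $A_k\to A$ in $L^2$. On the one hand, by the Remark the $L^2$-extensions satisfy $(A_k)_s\to A_s$, $(A_k)_a\to A_a$, $(A_k)_\parallel\to A_\parallel$, $(A_k)_\perp\to A_\perp$ in $L^2$. On the other hand, applying the pointwise formulas directly is a continuous operation on $L^2$ by the previous paragraph, so the pointwise expressions $(A_k(x,y)+A_k(y,x))/2$ and $\tfrac{A_k(x,y)\cdot(x-y)}{|x-y|^2}(x-y)$ converge in $L^2$ to $(A(x,y)+A(y,x))/2$ and $\tfrac{A(x,y)\cdot(x-y)}{|x-y|^2}(x-y)$, respectively. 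For each smooth $A_k$ the two interpretations agree identically by Definition \ref{def1}, hence the two $L^2$-limits coincide, i.e.\ the pointwise formulas hold as identities in $L^2$ for $A$. Passing to an a.e.-convergent subsequence yields the claimed pointwise equalities almost everywhere, which completes the proof for $(\cdot)_s$ and $(\cdot)_\parallel$; the statements for $(\cdot)_a$ and $(\cdot)_\perp$ follow by linearity.

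The main subtlety I foresee is purely technical: handling the diagonal $\{x=y\}$ in the definition of $A_\parallel$, where the formula branches. Since this set has $2n$-dimensional Lebesgue measure zero, the value of $A_\parallel$ there does not affect its $L^2$-equivalence class and the a.e.\ identity is unaffected; nothing else in the argument requires more than the two contractivity bounds $\|A_s\|_{L^2}\leq\|A\|_{L^2}$ and $\|A_\parallel\|_{L^2}\leq\|A\|_{L^2}$ together with density of $C^\infty_c$ in $L^2$.
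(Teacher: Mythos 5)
Your proposal is correct and is essentially the paper's own argument: both proceed by $L^2$-density of $C^\infty_c$, using the contractivity bounds from \eqref{pitsim} and \eqref{pitort} to see that the continuous extension and the pointwise formula are two bounded operators agreeing on smooth functions, hence agreeing a.e.\ on all of $L^2(\mathbb R^{2n})$ (the paper compresses this into a single triangle-inequality estimate, written out only for $(\cdot)_s$ with the other three cases declared similar, whereas you treat all four explicitly). Your extra step of passing to an a.e.-convergent subsequence is redundant --- equality in $L^2$ already gives a.e.\ equality --- but harmless.
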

\begin{proof}
We prove the Lemma only for $(\cdot)_s$, as the other cases are similar. For all $i\in\mathbb N$, let $A^{i}\in C^{\infty}_c(\mathbb R^{2n},\mathbb C^n)$ such that $\|A-A^{i}\|_{L^2} \leq 1/i$. By \eqref{pitsim},
\begin{align*}
\bigg\|A_s \bigg.&-\left.\frac{A(x,y)+A(y,x)}{2} \right\|_{L^2} \leq \\ & \leq  \|(A-A^{i})_s\|_{L^2} +\left\|A^{i}_s - \frac{A^{i}(x,y)+A^{i}(y,x)}{2} \right\|_{L^2} \\ & \;\;\;\; + \left\| \frac{(A(x,y) - A^{i}(x,y)) +(A(y,x) - A^{i}(y,x))}{2} \right\|_{L^2} \\ & = \|(A-A^{i})_s\|_{L^2} + \left\| \frac{(A(x,y) - A^{i}(x,y)) +(A(y,x) - A^{i}(y,x))}{2} \right\|_{L^2} \\ & \leq 2 \|A-A^{i}\|_{L^2} \leq 2/i\;.\qedhere\end{align*}\end{proof}

\begin{Rem}
If $A\in C^{\infty}_c$, the operators $(\cdot)_s, (\cdot)_a, (\cdot)_\parallel, (\cdot)_\perp$ commute with each other; because of Lemma \ref{almevlem}, this still holds a.e. for $A\in L^2(\mathbb R^{2n})$. Thus in the following we use e.g. the symbol $A_{s\parallel}$ for both $(A_s)_\parallel$ and $(A_\parallel)_s$.
\end{Rem}

\para{Sobolev spaces} Let $\Omega\subset \mathbb R^n$ be open and $r\in \mathbb R$, $p\in(1,\infty)$, $n\in\mathbb{N}\setminus \{0\}$. By the symbols $W^{r,p} = W^{r,p}(\mathbb R^n)$ and $W^{r,p}_c(\Omega)$ we denote the usual $L^p$-based Sobolev spaces. We also let $H^s = H^s(\mathbb{R}^n) = W^{s,2}(\mathbb{R}^n)$ be the standard $L^2$-based Sobolev space with norm $\|u\|_{H^s(\mathbb{R}^n)} = \|\mathcal{F}^{-1}( \langle\xi\rangle^s \hat u ) \|_{L^2(\mathbb{R}^n)}\;,$ where $s\in \mathbb R$, $\langle\xi\rangle := (1+|\xi|^2)^{1/2}$ and the Fourier transform is 
$$ \hat u(\xi) = \mathcal F u(\xi) = \int_{\mathbb R^n} e^{-ix\cdot\xi} u(x) dx\;.$$ 

One should note that there exist many equivalent definitions of fractional Sobolev spaces (see e.g. \cite{hitch}). Using the Sobolev embedding and multiplication theorems (see e.g. \cite{BBM01}, \cite{BH2017}), these spaces can often be embedded into each other: 

\begin{Lem} Let $s\in(0,1), \,p :=$\emph{max}$\{2, n/2s\}$ and $h\geq 0$. Then the embeddings 
\begin{multicols}{2}
\begin{enumerate}[label=(e\arabic*).]
\item $\; H^s \times H^s \hookrightarrow L^{n/(n/2+s p-2s)} \;,$
\item $\; H^s \times L^{p} \hookrightarrow L^{2n/(n+2s)}\;,$ 
\item $\; L^{2p} \times L^2 \hookrightarrow L^{2n/(n+2s)}\;,$  
\item $\; L^{2p} \times H^s \hookrightarrow L^2\;,$ 
\item $\; L^{2p} \times L^{2p} \hookrightarrow L^{p}\;,$  
\item $\; H^{sp-2s} \hookrightarrow L^{p}\;,$
\item $\; L^{2n/(n+2h)} \hookrightarrow H^{-h}$
\end{enumerate} 
\end{multicols}
\noindent hold, where $\times$ indicates the pointwise product.\qed\end{Lem}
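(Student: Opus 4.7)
The plan is to obtain all seven embeddings as consequences of the fractional Sobolev embedding theorem combined with H\"older's inequality and a single duality argument. Since $n \geq 2$ and $s \in (0,1)$, we always have $n > 2s$, so the standard embedding $H^s(\mathbb{R}^n) \hookrightarrow L^q(\mathbb{R}^n)$ is available for every $q \in [2, 2n/(n-2s)]$; together with the elementary identity $1/r = 1/r_1 + 1/r_2$ governing H\"older products, this should suffice.

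For the pointwise-product embeddings (e1)--(e5) I would treat the two cases $p = n/2s$ (the regime $n \geq 4s$) and $p = 2$ (the regime $n \leq 4s$) separately. In each case one picks an intermediate Lebesgue exponent for the $H^s$-factor, uses Sobolev to move from $H^s$ into that $L^q$, and then applies H\"older to match the target exponent; the arithmetic of the definition $p = \max\{2, n/2s\}$ is arranged precisely so that the right-hand exponent drops out. For example, in (e1) with $p = n/2s$ one uses $H^s \hookrightarrow L^{2n/(n-2s)}$ together with $sp - 2s = n/2 - 2s$, while (e5) is just H\"older via $1/p = 1/(2p) + 1/(2p)$ and needs no Sobolev step at all.

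For (e6), if $p = n/2s$ then $sp - 2s = n/2 - 2s$ and $H^{n/2-2s} \hookrightarrow L^{n/(2s)} = L^p$ is a direct Sobolev embedding; if $p = 2$ then $sp - 2s = 0$ and the assertion reduces to the tautology $L^2 \hookrightarrow L^2$. Embedding (e7) I would obtain purely by duality from $H^h \hookrightarrow L^{2n/(n-2h)}$ (valid since $n \geq 2 > 2h$ after absorbing the edge case $h=0$ trivially), using the identifications $(H^h)^* = H^{-h}$ and $(L^{2n/(n-2h)})^* = L^{2n/(n+2h)}$.

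The main obstacle is really only bookkeeping: one must verify that in the $p=2$ branch the intermediate Sobolev exponent chosen lies inside the admissible range $[2, 2n/(n-2s)]$, which is exactly what the defining inequality $n \leq 4s$ of that regime guarantees. Once that check is done for each of (e1)--(e5), no embedding requires more than one H\"older step and one Sobolev step, and the proof reduces to straightforward exponent chasing without any deep analytical ingredient.
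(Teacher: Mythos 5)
Your strategy (Sobolev embedding to retune the $H^s$ factors, H\"older to combine, duality for (e7)) does dispose of (e1), (e2), (e4), (e5) and (e6) in both regimes, and your exponent checks there are exactly right: the needed intermediate embeddings $H^s \hookrightarrow L^4$ and $H^s \hookrightarrow L^{n/s}$ hold precisely when $n \le 4s$, which is the defining inequality of the $p=2$ branch. But there is a genuine gap at (e3) in the $p=2$ branch, and your blanket claim that the definition $p=\max\{2,n/2s\}$ makes ``the right-hand exponent drop out'' conceals it. Unlike (e1), (e2) and (e4), the product in (e3) contains no $H^s$ factor whose Lebesgue exponent can be tuned by Sobolev: both factors sit in fixed Lebesgue spaces, so the only available tool is H\"older, which gives $L^{2p}\times L^2 \hookrightarrow L^r$ with $1/r = 1/(2p)+1/2$. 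When $p=n/2s$ this is exactly $r = 2n/(n+2s)$, but when $p=2>n/2s$ (i.e.\ $n<4s$) one gets $r=4/3$, which is \emph{strictly larger} than $2n/(n+2s)$, and on the unbounded domain $\mathbb{R}^n$ there is no inclusion of $L^{4/3}$ into a Lebesgue space of smaller exponent. So your one-H\"older-step argument cannot close, and indeed (e3), read literally, fails in that regime. What does survive --- and is all the paper actually uses (e3) for, in the proof of Lemma 3.3 --- is the pairing statement: the product lies in $L^r$, and $L^{4/3}\hookrightarrow H^{-s}$ by duality precisely because $H^s\hookrightarrow L^4$ when $n\le 4s$, so $\langle \cdot\,,v\rangle$ with $v\in H^s$ remains finite. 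A correct writeup should either restrict (e3) to the case $p=n/2s$ or restate it as landing in $L^r\hookrightarrow H^{-s}$ with $1/r=1/(2p)+1/2$.

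A second, smaller slip concerns (e7): you justify $H^h\hookrightarrow L^{2n/(n-2h)}$ by ``$n\ge 2>2h$'', which tacitly presumes $h<1$, whereas the lemma allows any $h\ge 0$. The correct requirement for your duality argument is $h<n/2$; for $h\ge n/2$ the exponent $2n/(n+2h)\le 1$ and the statement degenerates in any case. The restriction is harmless here, since the paper only invokes (e7) with $h=s$ and $h=sp-2s$ (equal to $0$ or $n/2-2s$), both below $n/2$, but it must be stated. Note finally that the paper itself gives no proof of this lemma --- it is stated with a \qedsymbol{} and a pointer to standard Sobolev embedding and multiplication theorems --- so the comparison above is against the statement itself rather than an in-paper argument.
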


Let $U, F\subset \mathbb R^n$ be an open and a closed set. We define the spaces $$H^s(U) = \{ u|_U, u\in H^s(\mathbb R^n) \}\;,$$$$ \tilde H^s(U) = \text{closure of $C^\infty_c(U)$ in $H^s(\mathbb R^n)$}\;, \;\mbox{and}$$ $$ H^s_F(\mathbb R^n) = \{ u\in H^s(\mathbb R^n) : \text{supp}(u) \subset F \}\;, $$

\noindent where $\|u\|_{H^s(U)}= \inf\{ \|w\|_{H^s(\mathbb R^n)} ; w\in H^s(\mathbb R^n), w|_{U}=u \}$. For $s\in(0,1)$ and a bounded open set $U\subset \mathbb R^n$, let $X:= H^s(\mathbb R^n)/\tilde H^s(U)$. If $U$ is a Lipschitz domain, then then $\tilde H^s(U)$ and $H^s_{\bar U}(\mathbb R^n)$ can be identified for all $s\in \mathbb R$ (see \cite{GSU2017}); therefore, $X = H^s(\mathbb R^n)/H^s_{\bar U}(\mathbb R^n)$, and its elements are equivalence classes of functions from $H^s(\mathbb R^n)$ coinciding on $U_e$. X is called \emph{abstract trace space}.

\para{Non-local operators} If $u\in \mathcal S(\mathbb R^n)$, its fractional Laplacian is (see \cite{Kw15}, \cite{hitch}) $$ (-\Delta)^s u(x) := \mathcal C_{n,s} \lim_{\epsilon \rightarrow 0^+}\int_{\mathbb R^n\setminus B_\epsilon (x)} \frac{u(x)-u(y)}{|y-x|^{n+2s}} dy\;,$$ 
\noindent for a constant $\mathcal C_{n,s}$. Its Fourier symbol is $|\xi|^{2s}$, i.e. $(-\Delta)^s u(x) =\mathcal F^{-1} ( |\xi|^{2s} \hat u (\xi) )$. By \cite{Ho90}, Ch. 4 and \cite{Ta96}, $(-\Delta)^s$ extends as a bounded map $(-\Delta)^s : W^{r,p}(\mathbb R^n) \rightarrow W^{r-2s,p}(\mathbb R^n)$ for $r\in\mathbb R$ and $p\in(1,\infty)$. Let $\alpha(x,y): \mathbb R^{2n}\rightarrow \mathbb R^n$ be the map 
\begin{equation*}\alpha(x,y)= \frac{\mathcal C_{n,s}^{1/2}}{\sqrt 2} \frac{y-x}{|y-x|^{n/2 +s+1}}\;.\end{equation*} 

\noindent If $u\in C^{\infty}_c(\mathbb R^n)$ and $x,y \in \mathbb R^n$, the \emph{fractional gradient of $u$ at points $x$ and $y$} is \begin{equation}\label{graddef}\nabla^s u(x,y) := (u(x)-u(y))\alpha(x,y)\;,\end{equation}

\noindent and is thus a symmetric and parallel vector function of $x$ and $y$. Since it was proved in \cite{Co18} that $\|\nabla^s u\|^2_{L^2(\mathbb R^{2n})} \leq \|u\|^2_{H^s(\mathbb R^n)}$, and thus that the linear operator $\nabla^s$ maps $C^\infty_c(\mathbb R^n)$ into $L^2(\mathbb R^{2n})$, we see that $\nabla^s$ can be extended to $\nabla^s : H^s(\mathbb R^n) \rightarrow L^2(\mathbb R^{2n})$. Using a proof by density similar to the one for Lemma \ref{almevlem}, one sees that \eqref{graddef} still holds a.e. for $u\in H^s(\mathbb R^n)$.

\noindent If $u\in H^s(\mathbb R^n)$ and $v\in L^2(\mathbb R^{2n})$, the \emph{fractional divergence} is defined as that operator $(\nabla\cdot)^s : L^2(\mathbb R^{2n}) \rightarrow H^{-s}(\mathbb R^n)$ satisfying
\begin{equation}
\label{eq:defdiv}
\langle (\nabla\cdot)^s v,u \rangle_{L^2(\mathbb R^{n})} = \langle v,\nabla^s u \rangle_{L^2(\mathbb R^{2n})}  \;,
\end{equation}

\noindent i.e. it is by definition the adjoint of the fractional gradient. As observed in \cite{Co18}, Lemma 2.1, if $u \in H^s(\mathbb R^n)$ the equality $(\nabla\cdot)^s(\nabla^su)(x) = (-\Delta)^su(x)$ holds in weak sense, and $(\nabla\cdot)^s(\nabla^su) \in H^{-s}(\mathbb R^n)$. 

\begin{Lem} Let $u\in C^\infty_c(\mathbb R^n)$. There exists a constant $k_{n,s}$ such that $${\cal F}(\nabla^s u)(\xi,\eta) = k_{n,s} \left( \frac{\xi}{|\xi|^{n/2+1-s}}+\frac{\eta}{|\eta|^{n/2+1-s}} \right){\cal F} u(\xi+\eta)\;.$$
\end{Lem}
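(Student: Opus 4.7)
The plan is to compute the $2n$-dimensional Fourier transform of $\nabla^s u(x,y) = (u(x)-u(y))\alpha(x,y)$ directly, splitting the difference into the two pieces $u(x)\alpha(x,y)$ and $u(y)\alpha(x,y)$ and decoupling each by a linear change of variables that separates a pure $u$-integral from a pure kernel integral.

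More precisely, I would first write
\begin{equation*}
(\mathcal F \nabla^s u)(\xi,\eta) = \frac{\mathcal C_{n,s}^{1/2}}{\sqrt 2}\int_{\mathbb R^{2n}} e^{-ix\cdot\xi-iy\cdot\eta}(u(x)-u(y))\,\frac{y-x}{|y-x|^{n/2+s+1}}\,dx\,dy
\end{equation*}
and treat the two pieces separately. In the $u(x)$-piece the substitution $z=y-x$ factors the integral into $\hat u(\xi+\eta)$ times the $\eta$-Fourier transform of the kernel $z/|z|^{n/2+s+1}$. In the $u(y)$-piece the substitution $w=x-y$ factors it into $\hat u(\xi+\eta)$ times the $\xi$-Fourier transform of $-w/|w|^{n/2+s+1}$; the two sign flips (one from the subtraction, one from $y-x=-w$) combine so that both contributions add with the same coefficient.

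The remaining computation reduces to finding the Fourier transform of the vector-valued kernel $z/|z|^{n/2+s+1}$, which I would handle via the classical formula $\mathcal F(|z|^{-\beta})(\zeta) = c_{n,\beta}|\zeta|^{\beta-n}$ for $0<\beta<n$. Taking $\beta := n/2+s-1$, which lies in $(0,n)$ for $n\geq 2$ and $s\in(0,1)$, together with the gradient identity
\begin{equation*}
\frac{z}{|z|^{n/2+s+1}} = -\frac{1}{n/2+s-1}\,\nabla_z\bigl(|z|^{-(n/2+s-1)}\bigr),
\end{equation*}
the Fourier transform converts the gradient into multiplication by $i\zeta$ and yields
\begin{equation*}
\mathcal F\!\left[\frac{z}{|z|^{n/2+s+1}}\right]\!(\zeta) = -\frac{i\,c_{n,\beta}}{n/2+s-1}\,\frac{\zeta}{|\zeta|^{n/2+1-s}}\,.
\end{equation*}
Inserting this with $\zeta=\eta$ and $\zeta=\xi$ into the two pieces gives the claimed formula with the explicit constant $k_{n,s} = -i\,\mathcal C_{n,s}^{1/2}\,c_{n,\beta}\big/\bigl[\sqrt 2\,(n/2+s-1)\bigr]$.

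The main technical point to watch is that $z/|z|^{n/2+s+1}$ is singular at the origin and of slow decay at infinity, so its Fourier transform must be interpreted in the tempered-distribution sense and the change-of-variables/Fubini step needs justification. This is exactly what the gradient representation above accomplishes: $|z|^{-(n/2+s-1)}$ is a locally integrable tempered distribution in the stated range of $n$ and $s$, its Fourier transform is given by the standard Riesz-kernel formula, and the rapid decay of $\hat u$ for $u\in C^\infty_c$ then legitimises all pointwise manipulations almost everywhere.
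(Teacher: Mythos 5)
Your proof is correct and follows essentially the same route as the paper's: a change of variables to decouple $\mathcal F u(\xi+\eta)$ from the kernel, the gradient representation $z/|z|^{n/2+s+1} = -\nabla_z\bigl(|z|^{-(n/2+s-1)}\bigr)/(n/2+s-1)$, and the Riesz formula $\mathcal F(|z|^{-\beta})(\zeta)=c_{n,\beta}|\zeta|^{\beta-n}$. The only differences are cosmetic: you split $u(x)-u(y)$ into two pieces before changing variables rather than after, and you spell out the tempered-distribution justification (including the check that $\beta=n/2+s-1\in(0,n)$) that the paper leaves implicit.
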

\begin{proof} As $u\in C^\infty_c(\mathbb R^n)$, we know that $\nabla^s u \in L^2(\mathbb R^{2n})$, and we can compute its Fourier transform in the variables $\xi,\eta$. By a change of variables,
\begin{equation*}\begin{split}
{\cal F}(\nabla^s u)(\xi,\eta) & = \frac{\mathcal C_{n,s}^{1/2}}{\sqrt 2}\int_{\mathbb R^n}\int_{\mathbb R^n} e^{-ix\cdot\xi}e^{-iy\cdot\eta} \frac{u(x)-u(y)}{|y-x|^{n/2 +s+1}}(y-x) \; dx\,dy \\ & = k'_{n,s}\int_{\mathbb R^n} \frac{e^{-iz\cdot\eta}}{|z|^{n/2 +s+1}} z \int_{\mathbb R^n} e^{-ix\cdot(\xi+\eta)} (u(x)-u(x+z)) \; dx\,dz \\ & = k'_{n,s}\int_{\mathbb R^n} \frac{z}{|z|^{n/2 +s+1}} e^{-iz\cdot\eta} \,{\cal F}u(\xi+\eta) (1-e^{iz\cdot(\xi+\eta)})\,dz \\ & = k''_{n,s}\, {\cal F}u(\xi+\eta) \int_{\mathbb R^n} (e^{-iz\cdot\eta} - e^{iz\cdot\xi})\nabla_z(|z|^{1-n/2 -s}) \,dz \\ & = k''_{n,s}\, {\cal F}u(\xi+\eta) \left(\eta{\cal F}(|z|^{1-n/2 -s})(\eta)+\xi{\cal F}(|z|^{1-n/2 -s})(-\xi)\right) \\ & = k_{n,s} \left( \frac{\xi}{|\xi|^{n/2+1-s}}+\frac{\eta}{|\eta|^{n/2+1-s}} \right){\cal F} u(\xi+\eta)\;.\qedhere
\end{split}\end{equation*}\end{proof}

\begin{Lem}\label{extgrad}
The fractional gradient extends as a bounded map $$\nabla^s : H^r(\mathbb R^n) \rightarrow \langle D_x+D_y \rangle^{r-s} L^2(\mathbb R^{2n})\;,$$
\noindent and if $r\leq s$ then also $\; \nabla^s : H^r(\mathbb R^n) \rightarrow H^{r-s}(\mathbb R^{2n})\;. $
\end{Lem}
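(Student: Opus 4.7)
The plan is to reduce both claimed bounds to elementary weighted $L^2$ estimates on $\mathbb R^n$, using the explicit Fourier representation from the previous lemma together with a scaling argument. By density of $C_c^\infty(\mathbb R^n)$ in $H^r(\mathbb R^n)$ it suffices to work with Schwartz data and extend at the end.

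First I would use the identity $\widehat{\nabla^s u}(\xi,\eta) = k_{n,s}\,M(\xi,\eta)\,\hat u(\xi+\eta)$ from the previous lemma, with $M(\xi,\eta) := \xi/|\xi|^{n/2+1-s} + \eta/|\eta|^{n/2+1-s}$. Applying Plancherel to the target norm and changing variables $(\xi,\eta)\mapsto(\zeta,\omega)$ with $\zeta = \xi+\eta$ and $\omega = \eta$ (unit Jacobian), the integral takes the form
$$\int_{\mathbb R^n} w(\zeta)\,|\hat u(\zeta)|^2\,I(\zeta)\,d\zeta, \qquad I(\zeta) := \int_{\mathbb R^n}\bigl|M(\zeta-\omega,\omega)\bigr|^2\,d\omega,$$
with $w$ an explicit weight depending on the target norm. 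Since $M$ is jointly $(s-n/2)$-homogeneous and $O(n)$-equivariant, the rescaling $\omega\mapsto|\zeta|\omega$ combined with rotational invariance will give the closed form $I(\zeta)=|\zeta|^{2s}I_0$, where $I_0 := \int_{\mathbb R^n}|M(e-\tau,\tau)|^2\,d\tau$ is a single finite constant and $e\in \mathbb S^{n-1}$ is arbitrary.

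The hard part will be verifying $I_0<\infty$. Near $\tau=0$ the leading contribution comes from the $\tau/|\tau|^{n/2+1-s}$ term of $M$, giving an integrand of size $|\tau|^{2s-n}$, which is integrable in $\mathbb R^n$ as soon as $s>0$; the neighborhood of $\tau=e$ is handled symmetrically. At infinity the cancellation structure of $M$ is crucial: setting $f(\tau):=\tau/|\tau|^{n/2+1-s}$ one has $M(e-\tau,\tau)=f(\tau)-f(\tau-e)$, and since $|\nabla f(\tau)|\lesssim|\tau|^{s-n/2-1}$ the mean value theorem gives $|M(e-\tau,\tau)|\lesssim|\tau|^{s-n/2-1}$ for large $|\tau|$, hence an $|\tau|^{2s-n-2}$ decay that is integrable at infinity precisely because $s<1$.

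With the identity $I(\zeta)=|\zeta|^{2s}I_0$ in hand, both bounds reduce to pointwise comparisons. For the first claim the elementary inequality $|\zeta|^{2s}\leq\langle\zeta\rangle^{2s}$ absorbs the extra factor against the $H^r$-weight, yielding $\|\nabla^s u\|_{\langle D_x+D_y\rangle^{r-s}L^2}\lesssim\|u\|_{H^r}$. For the second claim, under $r\leq s$ the parallelogram identity gives $\langle(\xi,\eta)\rangle\geq\langle\xi+\eta\rangle/\sqrt 2$, and since $r-s\leq 0$ this upgrades to $\langle(\xi,\eta)\rangle^{2(r-s)}\leq C\langle\xi+\eta\rangle^{2(r-s)}$, reducing the $H^{r-s}(\mathbb R^{2n})$-norm of $\nabla^s u$ to the weighted integral just controlled. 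A standard density argument then extends $\nabla^s$ to a bounded operator on all of $H^r(\mathbb R^n)$.
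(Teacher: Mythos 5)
Your proof is correct, but it runs on a different engine than the paper's. Both arguments start from the Fourier representation $\widehat{\nabla^s u}(\xi,\eta)=k_{n,s}M(\xi,\eta)\,\hat u(\xi+\eta)$ of the preceding lemma, and your handling of the second claim (flipping $\langle(\xi,\eta)\rangle^{2(r-s)}\leq C\langle\xi+\eta\rangle^{2(r-s)}$ using $r-s\leq 0$) coincides with the paper's. The difference is in the core estimate: the paper never evaluates any integral over the $(\xi,\eta)$-fibers. It observes that the multiplier acts only through $\xi+\eta$, so $\langle D_x+D_y\rangle^{2(r-s)}\nabla^s u=\nabla^s(\langle D_x\rangle^{2(r-s)}u)$, and then uses the adjoint identity $\langle\nabla^s v,\nabla^s u\rangle=\langle(-\Delta)^s v,u\rangle$ to collapse the $2n$-dimensional norm to $\|(-\Delta)^{s/2}u\|^2_{H^{r-s}}\leq c\|u\|^2_{H^r}$, leaning on the already established facts $(\nabla\cdot)^s\nabla^s=(-\Delta)^s$ and $\|\nabla^s u\|_{L^2(\mathbb R^{2n})}\leq\|u\|_{H^s}$. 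You instead compute the fiber integral $I(\zeta)=\int_{\mathbb R^n}|M(\zeta-\omega,\omega)|^2\,d\omega$ head-on: the homogeneity degree $s-n/2$ and $O(n)$-equivariance of $M$ do give $I(\zeta)=I_0|\zeta|^{2s}$ after the rescaling $\omega\mapsto|\zeta|\tau$, and your finiteness check of $I_0$ is sound --- the $|\tau|^{2s-n}$ blow-up at the two singular points $\tau=0$ and $\tau=e$ is integrable precisely for $s>0$, while the oddness of $f(\tau)=\tau/|\tau|^{n/2+1-s}$ correctly turns $M(e-\tau,\tau)$ into the difference $f(\tau)-f(\tau-e)$, whose mean-value bound $|\tau|^{s-n/2-1}$ (valid since the segment from $\tau-e$ to $\tau$ stays at distance $\gtrsim|\tau|$ from the origin for $|\tau|\geq 2$) yields the $|\tau|^{2s-n-2}$ decay integrable precisely for $s<1$. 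What your route buys: it is self-contained (the finiteness of $I_0$ amounts to reproving, rather than citing, the $L^2$-boundedness of $\nabla^s$) and it makes explicit where each endpoint of $s\in(0,1)$ enters. What the paper's route buys: no singular-integral bookkeeping at all, and the exact identity $\|\nabla^s u\|^2_{\langle D_x+D_y\rangle^{r-s}L^2}=\|(-\Delta)^{s/2}u\|^2_{H^{r-s}}$, with sharp constant, falls out of duality for free.
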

\begin{proof}

\noindent Start with $u\in C^\infty_c(\mathbb R^n)$, and let $r\in\mathbb R$. Then
\spleq{estigrad1}{\|\nabla^s u\|^2_{\langle D_x+D_y \rangle^{r-s} L^2} & = ( \langle D_x+D_y \rangle^{r-s} \nabla^s u,  \langle D_x+D_y \rangle^{r-s} \nabla^s u )_{L^2} \\ & = ( \langle D_x+D_y \rangle^{2(r-s)} \nabla^s u, \nabla^s u )_{L^2} \\ & = ({\cal F}( \langle D_x+D_y \rangle^{2(r-s)} \nabla^s u), {\cal F}(\nabla^s u) )_{L^2}\;.
}
\noindent From the previous Lemma we can deduce that
\spleqno{ {\cal F}( \langle D_x+D_y \rangle&^{2(r-s)} \nabla^s u)  = (1+|\xi+\eta|^2)^{r-s} {\cal F}(\nabla^s u) \\ & = (1+|\xi+\eta|^2)^{r-s}  k_{n,s} \left( \frac{\xi}{|\xi|^{n/2+1-s}}+\frac{\eta}{|\eta|^{n/2+1-s}} \right){\cal F} u(\xi+\eta) \\ & = k_{n,s}  \left( \frac{\xi}{|\xi|^{n/2+1-s}}+\frac{\eta}{|\eta|^{n/2+1-s}} \right) {\cal F}( \langle D_x \rangle^{2(r-s)} u )(\xi+\eta) \\ & = {\cal F} (\nabla^s ( \langle D_x\rangle^{2(r-s)} u ))\;.}
Using the properties of the fractional gradient and \eqref{estigrad1},
\spleqno{  \|\nabla^s u\|^2_{\langle D_x+D_y \rangle^{r-s} L^2} & = ( {\cal F} (\nabla^s ( \langle D_x\rangle^{2(r-s)} u )), {\cal F}(\nabla^s u) )_{L^2} \\ & = ( \nabla^s ( \langle D_x\rangle^{2(r-s)} u ), \nabla^s u )_{L^2} = ( \langle D_x\rangle^{2(r-s)} u, (-\Delta)^s u )_{L^2} \\ & = (  \langle D_x\rangle^{r-s} (-\Delta)^{s/2} u, \langle D_x\rangle^{r-s} (-\Delta)^{s/2} u )_{L^2} \\ & = \| (-\Delta)^{s/2} u \|^2_{H^{r-s}} \leq c \|u\|^2_{H^r}\;.
}

\noindent An argument by density completes the proof of the first part of the statement. For the second one, observe that $r \leq s$ implies 
\spleqno{ \|v\|^2_{H^{r-s}} & = ( \langle D_{x,y}\rangle^{r-s} v, \langle D_{x,y}\rangle^{r-s} v )_{L^2} = ( \langle D_{x,y}\rangle^{2(r-s)} v, v )_{L^2} \\ & = ( (1+ |\xi|^2 + |\eta|^2)^{r-s} \hat v, \hat v )_{L^2} \leq  c( (1+ |\xi+\eta|^2)^{r-s} \hat v, \hat v )_{L^2} \\ & =c ( \langle D_x + D_y \rangle^{2(r-s)}v,v )_{L^2} = c \|v\|^2_{\langle D_x+D_y \rangle^{r-s} L^2}\;,}
\noindent and so $ \langle D_x+D_y \rangle^{r-s} L^2(\mathbb R^{2n}) \subseteq H^{r-s}(\mathbb R^{2n})$.\end{proof}

\noindent As a consequence of the above Lemma, the fractional divergence can be similarly extended as $(\nabla\cdot)^s : H^t(\mathbb R^{2n})\rightarrow H^{t-s}(\mathbb R^n)$ for all $t\geq s$.

\section{Definition and properties of FMSE}


\para{Fractional magnetic Schr\"odinger equation} Let $\Omega\subset \mathbb R^n$ be open, $\Omega_e = \mathbb R^n\setminus\overline\Omega$ be the \emph{exterior domain}, and also recall that $p :=$max$\{2, n/2s\}$. The \emph{vector potential} and \emph{scalar potential} are two functions $A: \mathbb R^{2n}\mapsto \mathbb C^n$ and $q:\mathbb R^n\mapsto\mathbb R$. The following properties are of interest:

\begin{enumerate}[label=\emph{(p\arabic*)}.]
\item $\;{\cal J}_1A, \;{\cal J}_2A \in L^{2p}(\mathbb R^n)\;,$
\item $\;A_{s\parallel} \in H^{sp-s}(\mathbb R^{2n}, \mathbb C^n)\;,$
\item $\;A_{a\parallel}(x,y) \cdot (y-x) \geq 0, \;$ for all $x, y\in \mathbb R^n\;,$
\item $\;q\in L^{p}(\Omega) \;,$
\item $\;A\in L^2(\mathbb R^{2n}), \;\;\;$ supp$(A) \subseteq \Omega^2\;.$
\end{enumerate}

\noindent With respect to the above properties, we define four sets of potentials: 
$$ \begin{array}{c}
{\cal A}_0 := \{ \mbox{vector potentials } A  \mbox{ verifying } (p1) - (p3) \}, \\
{\cal A} := \{ \mbox{vector potentials } A  \mbox{ verifying } (p1) - (p3) \mbox{ and } (p5) \}, \\
{\cal P}_0 := \{ \mbox{pairs of potentials } (A,q)  \mbox{ verifying } (p1) - (p4) \}, \\
{\cal P} := \{ \mbox{pairs of potentials } (A,q)  \mbox{ verifying } (p1) - (p5) \}. \\

\end{array} $$

\begin{Rem}
The peculiar definitions for the spaces in \emph{(p1)}, \emph{(p2)} and \emph{(p4)} are due to computational necessities: they make the following quantities
$$\|qu\|_{H^{-s}} ,\;\;\; \|(\nabla\cdot)^sA_{s\parallel}\|_{L^{p}} ,\;\;\; \|(\mathcal J_2A)^2\|_{L^{p}},\;\;\; \|u \mathcal J_2A \|_{L^2} $$

\noindent finite for $u\in H^s$, as needed in Remark 3.8, Lemma 3.12 and \eqref{usingp1}. This is easily proved by using Lemma 2.5. However, if $n\geq 4$, then $p=n/2s$, and so in this case $L^{2p} = L^{n/s}$ and $H^{sp-s}=H^{n/2-s}$; this simplifies the assumptions for $n$ large enough.   

\end{Rem}

\noindent Let $A \in {\cal A}_0$ and $u\in H^s(\mathbb R^n)$. By \emph{(p1)} and \emph{(e4)},
\spleq{usingp1}{
\| A(x,y)u(x) \|_{L^2(\mathbb R^{2n})} & = \left(\int_{\mathbb R^n} u(x)^2 \int_{\mathbb R^n} |A(x,y)|^2 dy\, dx\right)^{1/2} \\ & = \left(\int_{\mathbb R^n} u(x)^2 \,{\cal J}_2A(x)^2 \,dx\right)^{1/2} = \|u \,{\cal J}_2A\|_{L^2(\mathbb R^n)} \\ & \leq k \|u\|_{H^s} \|{\cal J}_2A\|_{L^{2p}}<\infty\;, 
}
\noindent and thus the \emph{magnetic fractional gradient} of $u$ can be defined as the function $\nabla^s_A u : \mathbb R^{2n}\rightarrow \mathbb C^n$ such that \begin{equation}\label{defgradA} \langle \nabla^s_A u, v \rangle := \langle \nabla^s u + A(x,y)u(x), v \rangle\;, \;\;\; \mathrm{for}\;\mathrm{all}\; v\in L^2(\mathbb R^{2n})\;. \end{equation}

\noindent By the same computation, $\nabla^s_A$ acts as an operator $\nabla^s_A : H^s(\mathbb R^n) \rightarrow L^2(\mathbb R^{2n})$. 

\noindent Let $A\in {\cal A}_0$, $u\in H^s({\mathbb R^n})$ and $v\in L^2(\mathbb R^{2n})$. The \emph{magnetic fractional divergence} is defined by duality as that operator $(\nabla\cdot)^s_A : L^2(\mathbb R^{2n}) \rightarrow H^{-s}({\mathbb R^n})$ such that
\spleqno{
\langle (\nabla\cdot)^s_A v, u \rangle := \langle v, \nabla^s_A u \rangle\;.
}

\noindent By construction, the magnetic fractional divergence and gradient can be combined; we call \emph{magnetic fractional Laplacian} $(-\Delta)^s_A:=(\nabla\cdot)^s_A(\nabla^s_A)$ that operator from $H^s(\mathbb R^n)$ to $H^{-s}(\mathbb R^n)$ such that, for all $u,v \in H^s(\mathbb R^n)$,  
\spleq{deflapA}{
\langle (-\Delta)^s_A u, v \rangle = \langle \nabla^s_A u, \nabla^s_A v \rangle\;.}

\begin{Rem}\label{Anonzero}
If $A\equiv 0$, the magnetic fractional Laplacian $(-\Delta)^s_A$ is reduced to its non-magnetic counterpart $(-\Delta)^s$, as expected. Since the fractional Laplacian is well understood (see e.g. \cite{GSU2017}), from now on we assume $A\not\equiv 0$.  
\end{Rem}

\begin{Lem}\label{lemform1} Let $A\in L^2(\mathbb R^{2n}) \cap {\cal A}_0$ and $u\in H^s(\mathbb R^n)$. The equation \spleq{form1}{(-\Delta)^s_A u = (-\Delta)^su + 2\int_{\mathbb R^n} \left(A_{a\parallel} \cdot \nabla^s u\right) \,dy + \left( (\nabla\cdot)^sA_{s\parallel} + \int_{\mathbb R^n} |A|^2 \, dy \right) u}
\noindent holds in weak sense.
\end{Lem}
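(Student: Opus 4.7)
The plan is to expand $\langle(-\Delta)^s_A u, v\rangle = \langle \nabla^s_A u, \nabla^s_A v\rangle$ via the defining relation $\nabla^s_A w = \nabla^s w + A(x,y)w(x)$, split the result into four bilinear terms, and match each to the corresponding term on the right-hand side of \eqref{form1}. Two of the four terms are immediate. The pure gradient term $\langle \nabla^s u, \nabla^s v\rangle$ equals $\langle(-\Delta)^s u, v\rangle$ by the defining duality \eqref{eq:defdiv}, producing the $(-\Delta)^s u$ contribution. The pure $A$-term $\langle A(x,y)u(x), A(x,y)v(x)\rangle$ reduces by Fubini to $\int u(x)\overline{v(x)}\bigl(\int|A(x,y)|^2\,dy\bigr)dx$, producing the $\int|A|^2\,dy$ contribution. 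Assumption \emph{(p1)} together with the estimate \eqref{usingp1} ensures absolute convergence.

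The heart of the argument is to simplify the sum of cross terms
$$T := \langle \nabla^s u, Av\rangle + \langle Au, \nabla^s v\rangle.$$
Here I would exploit two structural features of $\nabla^s$ visible from \eqref{graddef}: the function $\nabla^s u$ is \emph{symmetric} in $(x,y)$ (a product of two antisymmetric factors) and \emph{parallel} to $y-x$ (it is a scalar multiple of $\alpha$). Swapping $x\leftrightarrow y$ in half of each integral and using the identity $A(x,y)u(x)+A(y,x)u(y) = A_s(u(x)+u(y)) + A_a(u(x)-u(y))$, together with the analogous identity for $v$ and the fact that only the parallel projections $A_{s\parallel}, A_{a\parallel}$ survive the pairing against $\alpha$, I expect to arrive at
\begin{align*}
T &= \int_{\mathbb R^{2n}} (u(x)-u(y))(v(x)-v(y))\,\alpha\cdot A_{a\parallel}\,dx\,dy \\
&\quad + \int_{\mathbb R^{2n}} (u(x)v(x)-u(y)v(y))\,\alpha\cdot A_{s\parallel}\,dx\,dy.
\end{align*}
A second symmetrization, now using that $A_{a\parallel}$ is antisymmetric and $A_{s\parallel}$ symmetric under $x\leftrightarrow y$ (both easily verified from Definition \ref{def1}), collapses the first integral to $2\int v(x)\bigl(\int A_{a\parallel}\cdot\nabla^s u\,dy\bigr)dx$, which is exactly the middle term of \eqref{form1}, and the second to $\langle A_{s\parallel},\nabla^s(uv)\rangle = \bigl\langle\bigl((\nabla\cdot)^s A_{s\parallel}\bigr)u,\,v\bigr\rangle$ by the defining adjoint relation \eqref{eq:defdiv}, which is the final term.

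The main obstacle is technical rather than conceptual: verifying absolute convergence of every integral and legitimacy of every pairing under the mild regularity of \emph{(p1)--(p4)}. For the $A_{s\parallel}$ contribution one invokes \emph{(p2)} to place $A_{s\parallel}\in H^{sp-s}(\mathbb R^{2n})$; the extension of $(\nabla\cdot)^s$ noted after Lemma \ref{extgrad} then yields $(\nabla\cdot)^s A_{s\parallel}\in H^{sp-2s}(\mathbb R^n)$, and chaining the embeddings (e6), (e2), (e7) of Lemma 2.5 shows that $\bigl((\nabla\cdot)^s A_{s\parallel}\bigr)u\in H^{-s}$, the correct space for pairing with $v\in H^s$. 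For the $A_{a\parallel}$ term, the Fubini swaps on $\int A_{a\parallel}\cdot\nabla^s u\,dy$ follow from \emph{(p1)} and Cauchy--Schwarz; the sign hypothesis \emph{(p3)} is not needed here but will later supply coercivity. A standard density argument starting from $u\in C^\infty_c(\mathbb R^n)$ extends the identity to all $u\in H^s(\mathbb R^n)$, completing the proof.
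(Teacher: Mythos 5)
Your proposal is correct and takes essentially the same route as the paper's proof: expand $\langle \nabla^s_A u, \nabla^s_A v\rangle$ into four terms, identify $\langle(-\Delta)^s u,v\rangle$ and $\langle u\,({\cal J}_2A)^2,v\rangle$, and reduce the cross terms to the $A_{a\parallel}$ and $A_{s\parallel}$ contributions using the symmetric, parallel structure of $\nabla^s u$ together with the adjoint relation \eqref{eq:defdiv}, with the same integrability bookkeeping via \emph{(p1)}, \emph{(p2)} and the embeddings of Lemma 2.5. The only cosmetic difference is that you symmetrize the cross terms under $x\leftrightarrow y$ and split $A=A_s+A_a$ (your intermediate identity for $T$ checks out, and the collapse to $2\int v(x)\int A_{a\parallel}\cdot\nabla^s u\,dy\,dx$ is valid since $\alpha\cdot A_{a\parallel}$ is symmetric), whereas the paper achieves the same split by adding and subtracting $\langle\nabla^s u, A(y,x)v(x)\rangle$.
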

\begin{proof} 
\noindent By \eqref{deflapA}, $(-\Delta)^s_A u \in H^{-s}(\mathbb R^n)$, and in order to prove \eqref{form1} in weak sense one needs to compute $\langle (-\Delta)^s_A u, v \rangle$ for $v\in H^s(\mathbb R^n)$. By \eqref{deflapA} and \eqref{defgradA}, 
\spleqno{ \langle (-\Delta)^s_A u, v \rangle & = \langle \nabla^s u + A(x,y)u(x), \nabla^s v + A(x,y)v(x) \rangle \\ & = \langle\nabla^s u,\nabla^s v\rangle + \langle Au, Av\rangle + \langle\nabla^s u, Av\rangle + \langle \nabla^s v, Au\rangle\;, }
\noindent where all the above terms make sense, since by formula \eqref{usingp1} $\nabla^s u, \nabla^s v, Au$ and $Av$ all belong to $L^2(\mathbb R^{2n})$. The new term $\langle \nabla^s u, A(y,x)v(x) \rangle$ is also finite, so
\spleq{breaklap}{  \langle (-\Delta)^s_A u, v \rangle  = \,& \langle\nabla^s u,\nabla^s v\rangle + \langle Au, Av\rangle + \\ & + \langle\nabla^s u, A(x,y)v(x)\rangle -\langle \nabla^s u, A(y,x)v(x)\rangle + \\ & + \langle \nabla^s u, A(y,x)v(x) \rangle + \langle \nabla^s v, A(x,y)u\rangle\;. }
\noindent For the first term on the right hand side of \eqref{breaklap}, by definition, 
\spleq{firstterm}{
\langle\nabla^s u,\nabla^s v\rangle = \langle(\nabla\cdot)^s\nabla^s u, v\rangle = \langle (-\Delta)^s u, v \rangle\;.
} 
\noindent For the second one, by the embeddings \emph{(e5)}, \emph{(e2)} and $\emph{(e7)}$,
\spleq{secondterm}{
\langle Au, Av\rangle = \left\langle u(x) \int_{\mathbb R^n} |A(x,y)|^2 dy, v\right\rangle = \langle u ({\cal J}_2A)^2, v\rangle \;.
}
\noindent Since $u\in H^s(\mathbb R^n)$, by \eqref{normpart} we deduce ${\cal J}_2(\nabla^s u) \in L^2(\mathbb R^n)$. Now \emph{(e3)} implies that ${\cal J}_2(\nabla^s u){\cal J}_2A \in L^{\frac{2n}{n+2s}}$. On the other hand, by Cauchy-Schwarz
\spleqno{ 
\left\| \int_{\mathbb R^n} \right.&\left.\nabla^s u \cdot A dy \right\|^{\frac{2n}{n+2s}}_{L^{\frac{2n}{n+2s}}(\mathbb R^n)} = \int_{\mathbb R^n} \left| \int_{\mathbb R^n} \nabla^s u \cdot A dy \right|^{\frac{2n}{n+2s}} dx \\ & \leq \int_{\mathbb R^n} \left( \int_{\mathbb R^n} |\nabla^s u|\,|A| dy \right)^{\frac{2n}{n+2s}} dx \leq \int_{\mathbb R^n} \left( \int_{\mathbb R^n} |\nabla^s u|^2 dy \int_{\mathbb R^n} |A|^2 dy\right)^{\frac{n}{n+2s}} dx \\ & = \int_{\mathbb R^n} | {\cal J}_2(\nabla^s u) \; {\cal J}_2A |^{\frac{2n}{n+2s}} dx = \left\| {\cal J}_2(\nabla^s u) \; {\cal J}_2A \right\|^{\frac{2n}{n+2s}}_{L^{\frac{2n}{n+2s}}(\mathbb R^n)}\;,
 }
\noindent and so $\int_{\mathbb R^n} \nabla^s u \cdot A dy \in L^{\frac{2n}{n+2s}}$. Now $\langle \int_{\mathbb R^n} \nabla^s u \cdot A dy  , v\rangle$ is finite by \emph{(e7)}, and
\spleq{thirdfourthterm}{
\left\langle\nabla^s u, A(x,y)\right.&\left.v(x)\right\rangle -\langle \nabla^s u, A(y,x)v(x)\rangle = \\ & = \left\langle \int_{\mathbb R^n} \nabla^s u \cdot A(x,y) dy  , v\right\rangle - \left\langle \int_{\mathbb R^n} \nabla^s u \cdot A(y,x) dy  , v\right\rangle \\ & = \left\langle \int_{\mathbb R^n} \nabla^s u \cdot (A(x,y)-A(y,x)) dy  , v\right\rangle \\ & = \left\langle 2\int_{\mathbb R^n} \nabla^s u \cdot A_a\, dy  , v\right\rangle = \left\langle 2\int_{\mathbb R^n} \nabla^s u \cdot A_{a\parallel}\, dy  , v\right\rangle\;.
}
\noindent The last steps use Lemma \ref{almevlem} to write $A_a$ for $A\in L^2$ and to see that $\nabla^s u$ is a.e. a parallel vector for $u\in H^s(\mathbb R^n)$, which implies $\nabla^s u \cdot A_{a\perp} = 0 \;a.e.$. This computes the third and fourth terms on the right hand side of \eqref{breaklap}. For the last two terms observe that, since $A(y,x)v(x) - A(x,y)v(y)$ is antisymmetric, by Lemma \ref{almevlem} we have $ \langle \nabla^s u , A(y,x)v(x) - A(x,y)v(y) \rangle =0$, and so 
\spleq{lastterm}{ \left\langle \nabla^s u, A(y,x)\right.&\left.v(x) \right\rangle + \langle \nabla^s v, Au\rangle\ \\ & = \int_{\mathbb R^{2n}} A(x,y)\cdot (v(y)\nabla^s u + u(x)\nabla^s v )\; dx\,dy\; \\ & =
\int_{\mathbb R^{2n}} A\cdot \alpha \Big(v(y)(u(x)-u(y)) + u(x)(v(x)-v(y)) \Big)\; dx\,dy \\ & = \int_{\mathbb R^{2n}} A_{s\parallel}\cdot \alpha \Big(u(x)v(x) - u(y)v(y) \Big)\; dx\,dy \\ & = \langle A_{s\parallel}, \nabla^s(uv)\rangle = \langle u (\nabla\cdot)^s A_{s\parallel}, v \rangle \;.  
}
\noindent On the third line of \eqref{lastterm} the integrand is the product of a symmetric, parallel vector and $A$; this reduces $A$ to $A_{s\parallel}$. From \emph{(e1)}, \emph{(e7)} and Lemma \ref{extgrad} one sees that $\nabla^s(uv) \in H^{s-sp}$, and now $\langle A_{s\parallel}, \nabla^s(uv)\rangle$ makes sense by \emph{(p2)}. Eventually, (\ref{eq:defdiv}), \emph{(e6)}, \emph{(e2)} and \emph{(e7)} explain the last step. Equation \eqref{form1} follows from \eqref{breaklap}, \eqref{firstterm}, \eqref{secondterm}, \eqref{thirdfourthterm} and \eqref{lastterm}. \end{proof}

\begin{Lem}\label{lemsigma} Let $A\in L^2(\mathbb R^{2n}) \cap {\cal A}_0$. There exists a positive, symmetric distribution $\sigma\in {\cal D}'(\mathbb R^{2n})$ such that $A_{a\parallel} = \alpha(\sigma-1)$ a.e..
\end{Lem}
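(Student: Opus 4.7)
The plan is to exhibit $\sigma$ by an explicit formula and then verify the three required properties. First, observe that $\alpha(x,y)=\frac{\mathcal C_{n,s}^{1/2}}{\sqrt 2}\frac{y-x}{|y-x|^{n/2+s+1}}$ is itself antisymmetric and parallel to $y-x$, with
\[
\alpha(x,y)\cdot(y-x)=\tfrac{\mathcal C_{n,s}^{1/2}}{\sqrt 2}|y-x|^{1-n/2-s}>0\quad\text{for } x\neq y.
\]
Since $A\in L^2(\mathbb R^{2n})$, Lemma \ref{almevlem} guarantees that $A_{a\parallel}$ is simultaneously antisymmetric and parallel to $y-x$ almost everywhere, so there is a scalar $\lambda:\mathbb R^{2n}\to\mathbb R$, defined a.e., with $A_{a\parallel}(x,y)=\lambda(x,y)(y-x)$. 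Antisymmetry of $A_{a\parallel}$ forces $\lambda(x,y)=\lambda(y,x)$, and property \emph{(p3)} forces $\lambda\geq 0$ a.e.

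I would then set
\[
\sigma(x,y):=1+\tfrac{\sqrt 2}{\mathcal C_{n,s}^{1/2}}|y-x|^{n/2+s+1}\lambda(x,y),
\]
equivalently $\sigma-1=(A_{a\parallel}\cdot(y-x))/(\alpha\cdot(y-x))$. By construction $A_{a\parallel}=\alpha(\sigma-1)$ a.e.; symmetry $\sigma(x,y)=\sigma(y,x)$ is inherited from $\lambda$ and from the even weight $|y-x|^{n/2+s+1}$; and the pointwise bound $\sigma\geq 1$ gives positivity both a.e.\ and in the distributional sense, once we know $\sigma\in{\cal D}'$.

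The only non-trivial step is checking that $\sigma$ really defines a distribution on $\mathbb R^{2n}$, since $\lambda=(A_{a\parallel}\cdot(y-x))/|y-x|^2$ by itself need not be locally integrable near the diagonal $\{x=y\}$. A Cauchy--Schwarz estimate gives
\[
|\sigma(x,y)-1|=\tfrac{\sqrt 2}{\mathcal C_{n,s}^{1/2}}|y-x|^{n/2+s-1}\bigl|A_{a\parallel}\cdot(y-x)\bigr|\leq\tfrac{\sqrt 2}{\mathcal C_{n,s}^{1/2}}|y-x|^{n/2+s}|A_{a\parallel}(x,y)|,
\]
and since for $n\geq 1$, $s>0$ the weight $|y-x|^{n/2+s}$ is locally bounded and $A_{a\parallel}\in L^2(\mathbb R^{2n})\subset L^1_{\mathrm{loc}}(\mathbb R^{2n})$, we conclude that $\sigma-1\in L^1_{\mathrm{loc}}(\mathbb R^{2n})$, hence $\sigma\in{\cal D}'(\mathbb R^{2n})$, and the distribution $\sigma$ is positive because $\sigma\geq 1$ a.e. The main (and really only) obstacle is this last estimate: taming the apparent $1/|y-x|^2$ singularity of $\lambda$ with the weight $|y-x|^{n/2+s+1}$ that appears when one inverts $\alpha\cdot(y-x)$; once that is done, the conclusion is immediate.
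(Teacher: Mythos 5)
Your proposal is correct and takes essentially the same route as the paper: you define $\sigma-1$ by the same explicit formula $\frac{\sqrt 2}{\mathcal C_{n,s}^{1/2}}|y-x|^{n/2+s}\bigl(A_{a\parallel}\cdot\frac{y-x}{|y-x|}\bigr)$ and justify $\sigma\in{\cal D}'(\mathbb R^{2n})$ by the same key observation --- the weight $|y-x|^{n/2+s}$ is locally bounded while $A_{a\parallel}\in L^2(\mathbb R^{2n})\subset L^1_{\mathrm{loc}}(\mathbb R^{2n})$ --- which the paper phrases as a direct bound on $|\langle\sigma,\phi\rangle|$ over $B_{r_1}\times B_{r_2}$ rather than as an $L^1_{\mathrm{loc}}$ statement. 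Symmetry from the antisymmetry of $A_{a\parallel}$ and the bound $\sigma\geq 1$ from \emph{(p3)} are handled identically in both arguments.
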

\begin{proof} Because of Lemma \ref{almevlem}, $A_{a\parallel}$ is a parallel vector almost everywhere, and thus $\|A_{a\parallel} - (A_{a\parallel})_\parallel\|_{L^2} =0$. Again by Lemma \ref{almevlem},
\spleqno{
0 & = \|A_{a\parallel} - (A_{a\parallel})_\parallel\|_{L^2}  = \left\| A_{a\parallel} - \frac{A_{a\parallel} \cdot (x-y)}{|x-y|^2}(x-y) \right\|_{L^2} \\ & = \left\| A_{a\parallel} - \left( -\frac{\sqrt 2}{\mathcal C_{n,s}^{1/2}} \frac{A_{a\parallel} \cdot (x-y)}{|x-y|^{1-n/2-s}}\right) \frac{\mathcal C_{n,s}^{1/2}}{\sqrt 2} \frac{y-x}{|y-x|^{n/2 +s+1}} \right\|_{L^2} \\ & = \left\| A_{a\parallel} - \left(\left(1+ \frac{\sqrt 2}{\mathcal C_{n,s}^{1/2}} \frac{A_{a\parallel} \cdot (y-x)}{|x-y|^{1-n/2-s}}\right)-1\right) \alpha \right\|_{L^2}\;.
}
\noindent Moreover, if $\phi\in C^\infty_c(\mathbb R^{2n})$ and $B_{r_1}, B_{r_2}$ are balls in $\mathbb R^n$ centered at the origin such that supp$(\phi)\subset B_{r_1} \times B_{r_2}$, then by \eqref{pitsim}, \eqref{pitort} and Cauchy-Schwarz inequality 
\spleqno{
\left|\left\langle 1+\frac{\sqrt{2}}{C_{n,s}^{1/2}}\right.\right.&\left.\left. |y-x|^{n/2+s} \left( A_{a\parallel} \cdot \frac{y-x}{|y-x|} \right) ,\phi \right\rangle\right| = \\ & = \left| \int_{\mathbb R^n}\int_{\mathbb R^n} \left(1+\frac{\sqrt{2}}{C_{n,s}^{1/2}}|y-x|^{n/2+s} \left( A_{a\parallel} \cdot \frac{y-x}{|y-x|} \right)\right)\phi\;dy\,dx \right| \\ & \leq  \int_{\mathbb R^n}\int_{\mathbb R^n} \left|1+\frac{\sqrt{2}}{C_{n,s}^{1/2}}|y-x|^{n/2+s} \left( A_{a\parallel} \cdot \frac{y-x}{|y-x|} \right)\right||\phi|\;dy\,dx \\ & \leq \|\phi\|_{L^\infty} \int_{B_{r_1}}\int_{B_{r_2}} \left|1+\frac{\sqrt{2}}{C_{n,s}^{1/2}}|y-x|^{n/2+s} \left( A_{a\parallel} \cdot \frac{y-x}{|y-x|} \right)\right|\;dy\,dx \\ & \leq k \|\phi\|_{L^\infty} \left( 1 + \int_{B_{r_1}}\int_{B_{r_2}} |y-x|^{n/2+s} \left| A_{a\parallel} \cdot \frac{y-x}{|y-x|} \right|\;dy\,dx \right) \\ & \leq k \|\phi\|_{L^\infty} \left( 1 + \int_{B_{r_1}}\int_{B_{r_2}} (|x|+|y|)^{n/2+s} | A_{a\parallel}|\;dy\,dx \right) \\ & \leq k'\|\phi\|_{L^\infty} \left( 1 + \int_{B_{r_1}}\int_{B_{r_2}} | A_{a\parallel}|\;dy\,dx \right) \\ & \leq  k'\|\phi\|_{L^\infty} \left( 1 + \|A_{a\parallel}\|^2_{L^2(\mathbb R^{2n})} \right) \leq  k'\|\phi\|_{L^\infty} \left( 1 + \|A\|^2_{L^2(\mathbb R^{2n})} \right) < \infty\;.
}

\noindent Thus it makes sense to define a distribution $\sigma\in {\cal D}'(\mathbb R^{2n})$ such that $$ \langle\sigma,\phi\rangle= \left\langle 1+\frac{\sqrt{2}}{C_{n,s}^{1/2}} |y-x|^{n/2+s} \left( A_{a\parallel} \cdot \frac{y-x}{|y-x|} \right) ,\phi \right\rangle $$
\noindent holds for all $\phi\in C^\infty_c(\mathbb R^{2n})$. Given that $A_{a\parallel}$ is antisymmetric, it is clear that $\sigma$ is symmetric; moreover, property \emph{(p3)} assures that $\sigma \geq 1$. \end{proof}

\begin{Rem}\label{lapcond}
If $u\in {\cal S}(\mathbb R^n)$, by the previous Lemma we can rewrite the leading term of $(-\Delta)^s_A$ as
\spleqno{
\mathcal C_{n,s}\; PV \int_{\mathbb R^n} \sigma(x,y) \frac{u(x)-u(y)}{|x-y|^{n+2s}}\, dy\;.
}
\noindent This shows the connection between the magnetic and classical fractional Laplacians: if $\sigma(x,y) \equiv 1$, i.e. if $A_{a\parallel}\equiv 0$, the formula above defines $(-\Delta)^su$. Moreover, if $\sigma(x,y)$ is \emph{separable} (i.e. there are functions $\sigma_1, \sigma_2 : \mathbb R^n \rightarrow \mathbb R$ such that $\sigma(x,y)=\sigma_1(x)\sigma_2(y)$) we get the fractional conductivity operator (see \cite{Co18}).
\end{Rem}

Consider $(A,q)\in {\cal P}_0$ and $f\in H^s(\Omega_e)$. We say that $u\in H^s(\mathbb R^n)$ solves FMSE with exterior value $f$ if and only if
$$ \left\{\begin{array}{lr}
        (-\Delta)^s_A u + qu = 0 & \text{in } \Omega\,\;\\
        u=f & \text{in } \Omega_e
        \end{array}\right.$$

\noindent holds in weak sense, that is if and only if $u-f \in \tilde H^s(\Omega)$ and, for all $v\in H^s(\mathbb R^n)$,\spleq{FMSE}{
\langle(-\Delta)^s_A u, v\rangle + \langle qu, v \rangle = 0\;.
}

\begin{Rem}
By \emph{(p1), (p2)} and \emph{(p4)}, formula \eqref{FMSE} makes sense. This was already partially shown in the above discussion about the magnetic fractional Laplacian. For the last term, just use \emph{(p4)}, \emph{(e2)} and \emph{(e7)}. 
\end{Rem}


\para{Old gauges, new gauges} Let $(G,\cdot)$ be the abelian group of all strictly positive functions $\phi\in C^\infty(\mathbb R^n)$ such that $\phi|_{\Omega_e} = 1$. For $(A,q), (A',q') \in {\cal P}_0$, define
\begin{gather}
(A,q) \sim (A',q') \;\;\;\Leftrightarrow\;\;\; (-\Delta)^s_A u + qu = (-\Delta)^s_{A'} u + q'u\;, \label{newg} \\
(A,q) \approx (A',q') \;\;\;\Leftrightarrow\;\;\; \exists \phi \in G : (-\Delta)^s_A (u\phi) + qu\phi = \phi( (-\Delta)^s_{A'} u + q'u) \label{oldg}
\end{gather}  

\noindent for all $u\in H^s(\mathbb R^n)$. Both $\sim$ and $\approx$ are equivalence relations on ${\cal P}_0$, and thus we can consider the quotient spaces ${\cal P}_0/\sim$ and ${\cal P}_0/\approx$. Moreover, since $\phi \equiv 1 \in G$, we have $(A,q) \sim (A',q') \Rightarrow (A,q) \approx (A',q')$.

We say that FMSE \emph{has the gauge} $\sim$ if for each $(A,q)\in {\cal P}_0$ there exists $(A',q')\in {\cal P}_0$ such that $(A',q')\neq(A,q)$ and $(A,q)\sim(A',q')$. Similarly, we say that FMSE \emph{has the gauge} $\approx$ if for each $(A,q)\in {\cal P}_0$ there exist $\phi\in G$, $(A',q')\in {\cal P}_0$ such that $\phi\not\equiv 1$, $(A',q')\neq(A,q)$ and $(A,q)\approx(A',q')$.

\begin{Rem}
The definitions \eqref{newg} and \eqref{oldg}, which have been given for FMSE, can be extended to the local case in the natural way. 
\end{Rem}

If $s=1$, it is known that $(-\Delta)_A (u\phi) + qu\phi = \phi \left( (-\Delta)_{A+\frac{\nabla\phi}{\phi}} u + qu \right)$ for all $\phi\in G$ and $u\in H^1(\mathbb R^n)$. If we choose $\phi\not\equiv 1$, we have $\left(A+\frac{\nabla\phi}{\phi},q\right)\neq(A,q)$ and $(A,q)\approx\left(A+\frac{\nabla\phi}{\phi},q\right)$, which shows that MSE has the gauge $\approx$. On the other hand, if $(A,q)\sim(A',q')$ then necessarily $A=A'$ and $q=q'$: thus, MSE does not enjoy the gauge $\sim$. We now treat the case $s\in(0,1)$.

\begin{Lem}\label{charofnewg}
Let $(A,q), (A',q') \in {\cal P}_0$. Then $(A,q)\sim(A',q')$ if and only if $A_{a\parallel} = A'_{a\parallel}$ and $Q = Q'$, where $$ Q := q + \int_{\mathbb R^n}|A|^2\, dy + (\nabla\cdot)^sA_{s\parallel}\;,\;\;\;\;\;\; Q' := q' + \int_{\mathbb R^n}|A'|^2\, dy + (\nabla\cdot)^sA'_{s\parallel}\;. $$
\end{Lem}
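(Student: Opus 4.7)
The plan is to reduce both implications to the explicit formula of Lemma \ref{lemform1}, which rewrites $(-\Delta)^s_A u + qu$ as $(-\Delta)^s u + 2\int A_{a\parallel}\cdot\nabla^s u\, dy + Qu$, cleanly separating the contribution of $A_{a\parallel}$ from that of $Q$. The ``if'' direction is then immediate: if $A_{a\parallel}=A'_{a\parallel}$ and $Q=Q'$, subtracting the Lemma~\ref{lemform1} expressions for $(A,q)$ and $(A',q')$ gives identically zero, which is precisely the condition \eqref{newg}.

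For the ``only if'' direction, set $B:=A_{a\parallel}-A'_{a\parallel}$ and $R:=Q-Q'$. By Lemma~\ref{lemform1}, the hypothesis $(A,q)\sim(A',q')$ becomes
\begin{equation*}
2\int_{\mathbb{R}^n} B(x,y)\cdot\nabla^s u(x,y)\,dy + R(x)u(x)=0\qquad\text{in }H^{-s}(\mathbb{R}^n),
\end{equation*}
for every $u\in H^s(\mathbb{R}^n)$. I would pair this with a test function $v\in H^s(\mathbb{R}^n)$, substitute $\nabla^s u(x,y)=(u(x)-u(y))\alpha(x,y)$, and note that $b(x,y):=B(x,y)\cdot\alpha(x,y)$ is symmetric (both $B$ and $\alpha$ are a.e.\ antisymmetric, and $B$ is parallel to $\alpha$). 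A standard symmetrization of the ensuing integral converts the pairing into the manifestly symmetric bilinear identity
\begin{equation*}
\int_{\mathbb{R}^{2n}} b(x,y)\bigl(u(x)-u(y)\bigr)\bigl(v(x)-v(y)\bigr)\,dx\,dy + \int_{\mathbb{R}^n} R\,u\,v\,dx=0.
\end{equation*}

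The crucial step is to decouple the two summands. Choose $u\in C^\infty_c(U_1)$, $v\in C^\infty_c(U_2)$ with disjoint open sets $U_1,U_2\subset\mathbb{R}^n$; then $uv\equiv0$ kills the local integral, while the non-local integral, after expansion, loses its diagonal pieces $b\,u(x)v(x)$ and $b\,u(y)v(y)$ and collapses by symmetry of $b$ to $-2\int_{U_1\times U_2} b(x,y)u(x)v(y)\,dx\,dy=0$. Varying $u,v$ and exhausting $\mathbb{R}^{2n}\setminus\Delta$ by pairs of disjoint open sets forces $b=0$ a.e.\ off the diagonal; since $B$ is parallel to $\alpha$ there and $\alpha\neq0$, we conclude $B=0$ a.e., that is, $A_{a\parallel}=A'_{a\parallel}$. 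Feeding this back into the bilinear identity leaves $\int R\,u v\,dx=0$ for all $u,v\in H^s(\mathbb{R}^n)$, and density of products $C^\infty_c\cdot C^\infty_c$ among test functions yields $R=0$, i.e.\ $Q=Q'$.

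The main obstacle I anticipate is keeping the algebraic symmetries of $B$ and $\alpha$ honest at the a.e.\ level under the limited regularity provided by $\mathcal{P}_0$: one must invoke Lemma~\ref{almevlem} to be sure that ``$B$ antisymmetric and parallel to $\alpha$'' is more than a formal identity, so that $b=0$ genuinely implies $B=0$. Everything else---Fubini on the symmetric bilinear form, and the separation via disjoint supports---is routine once this algebraic structure is secured.
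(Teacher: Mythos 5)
Your proof is correct, but it takes a genuinely different route from the paper's for the ``only if'' direction. The paper does not symmetrize: it first rewrites $A_{a\parallel}-A'_{a\parallel}$ via the distribution $\sigma$ of Lemma \ref{lemsigma}, so that the hypothesis becomes $0=\mathcal C_{n,s}\int(\sigma'-\sigma)\frac{u(y)-u(x)}{|x-y|^{n+2s}}\,dy+u(x)(Q-Q')$, and then, for fixed $x$ and arbitrary $\psi\in C^\infty_c$, plugs in the single test function $u(y)=\psi(y)e^{-1/|x-y|}|x-y|^{n+2s}$. Since $u(x)=0$ this kills the zeroth-order term, and the factor $|x-y|^{n+2s}$ cancels the singular kernel, leaving $\int(\sigma-\sigma')e^{-1/|x-y|}\psi\,dy=0$, whence $\sigma=\sigma'$, then $A_{a\parallel}=A'_{a\parallel}$ by Lemma \ref{lemsigma} and $Q=Q'$ by substituting back. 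Your route instead pairs the identity with a second function $v$, symmetrizes the kernel $b=B\cdot\alpha$ (your symmetry claim is right: $B$ and $\alpha$ are both a.e.\ antisymmetric by Lemma \ref{almevlem}), and decouples via disjoint supports --- the standard off-diagonal kernel-recovery argument for nonlocal operators. Each approach has its merits: yours avoids the $\sigma$-representation entirely, is more careful distributionally (the paper reads an $H^{-s}$ identity pointwise in $x$, which is somewhat informal), and keeps the integrand supported away from the diagonal so no singularity ever appears; the paper's trick is shorter, delivers both conclusions from one test function, and the $\sigma$-formulation it relies on is reused later (in Lemma 3.10 and in the proof of Theorem 1.1), so it has downstream value. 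Two small points to tighten in your write-up: take $U_1,U_2$ with disjoint \emph{closures} (positive distance), so that $b\in L^2(U_1\times U_2)$ and the expansion of the quadratic form is absolutely convergent --- such pairs still exhaust $\mathbb R^{2n}$ minus the diagonal; and note that the integrability justifying your symmetrization is exactly $|b(u(x)-u(y))(v(x)-v(y))|\le 2\|v\|_{L^\infty}|B|\,|\nabla^s u|\in L^1(\mathbb R^{2n})$, using that $B$ is parallel to $\alpha$ so $|B\cdot\alpha|=|B|\,|\alpha|$.
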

\begin{proof}
One direction of the implication is trivial: by \eqref{form1} and the definition, it is clear that if $A_{a\parallel} = A'_{a\parallel}$ and $Q=Q'$ then $ (-\Delta)^s_A u + qu = (-\Delta)^s_{A'} u + q'u $. 

\noindent For the other one, use Lemma 3.2 to write $ (-\Delta)^s_A u + qu = (-\Delta)^s_{A'} u + q'u $ as  
\spleq{comput1}{
0 &= 2\int_{\mathbb R^n} |\alpha|^2 (\sigma' - \sigma)(u(y)-u(x))\,dy + u(x) (Q-Q') \\ & = \mathcal C_{n,s}\int_{\mathbb R^n} (\sigma' - \sigma)\frac{u(y)-u(x)}{|x-y|^{n+2s}}\,dy + u(x) (Q-Q') \;. 
}

\noindent Fix $\psi \in C^{\infty}_c(\mathbb R^n)$, $x\in \mathbb R^n$ and $u(y):= \psi(y)e^{-1/|x-y|} |x-y|^{n+2s}$; one sees that $u\in \cal S$, since it is compactly supported and all the derivatives of the smooth function $e^{-1/|x-y|}$ vanish at $x$. Thus $u\in H^s$, and we can substitute it in \eqref{comput1}: 
$$0= \int_{\mathbb R^n} (\sigma(x,y) - \sigma'(x,y))e^{-1/|x-y|}\psi(y) \,dy  = \langle (\sigma(x,\cdot) - \sigma'(x,\cdot))e^{-1/|x-y|} ,\psi\rangle\;.$$
\noindent Being $\psi$ arbitrary and $e^{-1/|x-y|}$ non-negative, we deduce that $y\mapsto\sigma(x,y) - \sigma'(x,y)$ is zero for any fixed $x$, that is, $\sigma = \sigma'$. Then $A_{a\parallel} = A'_{a\parallel}$ by Lemma \ref{lemsigma}, and also $Q=Q'$ by \eqref{comput1}. \end{proof}

\begin{Lem}
Let $A\not\equiv 0$. Then FMSE has the gauge $\sim$. 
\end{Lem}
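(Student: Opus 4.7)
The plan is to use Lemma \ref{charofnewg}, which reduces $\sim$-equivalence to the two conditions $A_{a\parallel} = A'_{a\parallel}$ and $Q = Q'$. Given $(A,q) \in {\cal P}_0$, it therefore suffices to produce a non-trivial perturbation of $(A,q)$ preserving both quantities. The natural route is to perturb $A$ by a \emph{symmetric} vector field $B$, so that $A_{a\parallel}$ is automatically left untouched, and to absorb the induced changes to $\int|A|^2\,dy$ and $(\nabla\cdot)^s A_{s\parallel}$ into a suitably redefined $q'$.

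Concretely, I would fix a non-zero real-valued $\psi \in C^\infty_c(\mathbb R^{2n})$ with $\psi(x,y) = \psi(y,x)$, and set $B(x,y) := \psi(x,y)(x-y)$; this gives a non-zero, smooth, compactly supported, symmetric, parallel vector function on $\mathbb R^{2n}$. Then I would define
\begin{equation*}
A' := A + B, \qquad q' := q - \int_{\mathbb R^n}\bigl(2\,{\rm Re}(A\cdot\overline{B}) + |B|^2\bigr)\,dy - (\nabla\cdot)^s B.
\end{equation*}
A direct computation (using $B_{s\parallel} = B$ and $B_{a\parallel} = 0$) yields $A'_{a\parallel} = A_{a\parallel}$ and $Q' = Q$, so Lemma \ref{charofnewg} gives $(A,q) \sim (A',q')$; and $B \not\equiv 0$ forces $A' \neq A$ and thus $(A',q') \neq (A,q)$.

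The remaining work is to check that $(A',q') \in {\cal P}_0$. Properties \emph{(p1)}--\emph{(p3)} follow in turn from the triangle inequality for the $\mathcal J_i$ operators, from $A'_{s\parallel} = A_{s\parallel} + B \in H^{sp-s}$, and from $A'_{a\parallel} = A_{a\parallel}$ (which inherits the sign condition directly from $A$). The only mildly delicate point is \emph{(p4)}: the integral correction lies in $L^p(\Omega)$ by Cauchy--Schwarz against $\mathcal J_2 A \in L^{2p}$ together with the compact support of $B$, while $(\nabla\cdot)^s B$ lies in $L^p(\Omega)$ because the arbitrary smoothness of $B$ combined with Lemma \ref{extgrad} and Sobolev embedding places $(\nabla\cdot)^s B$ in $C_0(\mathbb R^n) \cap L^\infty$. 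The substantive content of the result is carried entirely by Lemma \ref{charofnewg}; once that characterization is in hand, the argument is essentially formal, and the hypothesis $A\not\equiv 0$ simply fits the running assumption from Remark \ref{Anonzero}.
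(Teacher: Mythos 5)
Your construction fails at its central step because of a parity error in the bivariate symmetry bookkeeping. In Definition \ref{def1}, symmetry of a vector field means $B(x,y)=B(y,x)$, and the factor $(x-y)$ in your $B(x,y):=\psi(x,y)(x-y)$ flips sign under the exchange of $x$ and $y$. So with $\psi$ \emph{symmetric}, as you chose, one gets $B(y,x)=\psi(y,x)(y-x)=-B(x,y)$: your $B$ is antisymmetric (and parallel), i.e. $B_{s\parallel}=0$ and $B_{a\parallel}=B\not\equiv 0$ --- exactly the opposite of the identities $B_{s\parallel}=B$, $B_{a\parallel}=0$ on which your ``direct computation'' rests. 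This is the same mechanism that makes the fractional gradient $\nabla^s u=(u(x)-u(y))\alpha(x,y)$ symmetric: an antisymmetric scalar times the antisymmetric vector $\alpha$. Consequently $A'_{a\parallel}=A_{a\parallel}+B\neq A_{a\parallel}$, so the first condition of Lemma \ref{charofnewg} fails (equivalently, by Lemma \ref{lemsigma} you have changed $\sigma$, which is precisely the part of $A$ the gauge cannot touch); in addition, with your $q'$ one now gets $Q'=Q-(\nabla\cdot)^s B$ rather than $Q'=Q$, since $(\nabla\cdot)^s A'_{s\parallel}=(\nabla\cdot)^s A_{s\parallel}$ here, and the sign condition \emph{(p3)} for $A'$ reads $A_{a\parallel}\cdot(y-x)-\psi(x,y)\lvert x-y\rvert^2\geq 0$, which an arbitrary nonzero $\psi$ violates. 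So the pair $(A',q')$ you produce is neither gauge-equivalent to $(A,q)$ nor, in general, in ${\cal P}_0$.

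The slip is local and fixable: take $\psi$ \emph{antisymmetric} ($\psi(x,y)=-\psi(y,x)$), non-trivial, smooth and compactly supported; then $B$ is symmetric and parallel, $A'_{a\parallel}=A_{a\parallel}$, your formula for $q'$ gives $Q'=Q$, and your verification of \emph{(p1)}--\emph{(p4)} goes through as stated (pointwise triangle inequality for ${\cal J}_2$, $B\in C^\infty_c\subset H^{sp-s}$, Cauchy--Schwarz giving $\lvert\int A\cdot\overline B\,dy\rvert\leq {\cal J}_2A\,{\cal J}_2B$ with ${\cal J}_2A\in L^{2p}$, and $(\nabla\cdot)^s B\in H^{t-s}$ for every $t$, hence bounded on the bounded set $\Omega$). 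The corrected argument is then genuinely different from the paper's: the paper leaves $A_{s\parallel}$, and hence $(\nabla\cdot)^s A_{s\parallel}$, completely untouched and perturbs only the perpendicular part ($A'=A_\parallel-A_\perp$, or $A'=A_\parallel+RA_\parallel$ when $A_\perp\equiv 0$), so that ${\cal J}_2A'$ is ${\cal J}_2A$ or $\sqrt2\,{\cal J}_2A$ and the membership checks are nearly free; that route needs $A\not\equiv 0$ to guarantee $A'\neq A$ in the second case. Your route instead perturbs the symmetric parallel part and absorbs the change into $q'$; once corrected it would not need $A\not\equiv 0$ at all (even $A\equiv 0$ acquires a gauge partner $(B,q')$), which is slightly stronger than the statement, though outside the paper's standing convention of Remark \ref{Anonzero}. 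As written, however, the proof is not correct.
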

\begin{proof}
If $(A,q) \in {\cal P}_0$ and $A'\in {\cal A}_0$ is such that $A_{a\parallel} = A'_{a\parallel}$, then by the previous Lemma $(A,q)\sim(A',q')$ if and only if $Q=Q'$, that is
$$ q' = q + \int_{\mathbb R^n}|A|^2\, dy + (\nabla\cdot)^sA_{s\parallel} - \int_{\mathbb R^n}|A'|^2\, dy - (\nabla\cdot)^sA'_{s\parallel}\;.$$

Since $A, A' \in {\cal A}_0$, we have $A_{s\parallel}, A'_{s\parallel} \in H^{sp-s}$ and $\mathcal J_2A, \mathcal J_2A' \in L^{2p}$. By the former fact, $(\nabla\cdot)^sA_{s\parallel}, (\nabla\cdot)^sA'_{s\parallel}$ belong to $H^{sp-2s}$ and eventually to $L^{p}$ because of \emph{(e6)}. By the latter fact and \emph{(e5)}, $\int_{\mathbb R^n}|A|^2\, dy, \int_{\mathbb R^n}|A'|^2\, dy \in L^{p}$. Also, $q\in L^{p}$ because $(A,q)\in{\cal P}_0$. This implies that \emph{(p4)} holds for the $q'$ computed above. Hence, if we find $A'\in {\cal A}_0$ such that $A_{a\parallel} = A'_{a\parallel}$, and then take $q'$ as above, we get a $(A',q') \in {\cal P}_0$ in gauge $\sim$ with a given $(A,q) \in {\cal P}_0$. We now show how to do this with $A\neq A'$, which implies that FMSE enjoys $\sim$. 

Fix $(A,q) \in {\cal P}_0$, and for the case $A_\perp\not\equiv 0$ let $A' := A_\parallel - A_\perp$. Then $A\neq A'$, because $A_\perp \neq A'_\perp$; moreover, from $A_\parallel = A'_\parallel$ we get $A_{a\parallel} = A'_{a\parallel}$ and $A'_{s\parallel} = A_{s\parallel} \in H^{sp-s}$. Eventually, $|A'|^2 = |A'_\parallel|^2 + |A'_\perp|^2 = |A_\parallel|^2 + |-A_\perp|^2 = |A_\parallel|^2 + |A_\perp|^2 = |A|^2$ implies $\mathcal J_2A' = \mathcal J_2A$, and $A'$ verifies \emph{(p1)}. If instead we have $A_\perp\equiv 0$, let $A' = A_\parallel + RA_\parallel$, where $R$ is any $\pi/2$ rotation. Then as before $A_{a\parallel} = A'_{a\parallel}$ and $A'_{s\parallel} = A_{s\parallel} \in H^{sp-s}$, because $A_\parallel = A'_\parallel$. We also have $A\neq A'$, because $A_\perp = 0 \neq R A_\parallel = A'_\perp$. Finally, since $\mathcal J_2 A \in L^p$, $A'$ verifies \emph{(p1)}:
\spleqno{
\mathcal J_2A' & = \left( \int_{\mathbb R^n} |A'|^2 dy \right)^{1/2} = \left( \int_{\mathbb R^n} |A'_\parallel|^2 + |A'_\perp|^2 dy \right)^{1/2} \\ & = \left( \int_{\mathbb R^n} |A_\parallel|^2 + |RA_\parallel|^2 dy \right)^{1/2} = \left( \int_{\mathbb R^n} 2|A_\parallel|^2 dy \right)^{1/2} = \sqrt 2\, \mathcal J_2 A\;\qedhere.}\end{proof}

\begin{Lem}
FMSE does not have the gauge $\approx$.
\end{Lem}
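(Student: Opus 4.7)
The plan is to argue by contradiction. Suppose $(A,q), (A',q') \in \mathcal P_0$ and $\phi \in G$ with $\phi \not\equiv 1$ satisfy $(A,q) \approx (A',q')$ via $\phi$, and I aim to derive $\phi \equiv 1$. Using Lemma \ref{lemform1} together with Lemma \ref{lemsigma} and Remark \ref{lapcond}, the magnetic fractional Laplacian decomposes as $(-\Delta)^s_A = L_\sigma + Q_A$, where
\[ L_\sigma w(x) := \mathcal C_{n,s}\,\mathrm{PV}\!\int_{\mathbb R^n}\sigma(x,y)\frac{w(x)-w(y)}{|x-y|^{n+2s}}\,dy, \qquad Q_A := (\nabla\cdot)^s A_{s\parallel} + (\mathcal J_2 A)^2, \]
with $\sigma$ a symmetric, positive distribution; analogously $(-\Delta)^s_{A'} = L_{\sigma'} + Q_{A'}$. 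The defining identity of $\approx$ then rearranges as
\[ L_\sigma(u\phi) - \phi L_{\sigma'} u = \phi(Q_{A'}+q')u - (Q_A+q)u\phi \quad \text{in } H^{-s}(\mathbb R^n), \]
for every $u \in H^s(\mathbb R^n)$.

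The key observation is that the right-hand multiplier terms can be killed by choosing $u$ supported in the complement of the region where the identity is evaluated. First I take $u \in C^\infty_c(\Omega_e)$: since $\phi \equiv 1$ on $\Omega_e$, the product $u\phi$ coincides with $u$ on all of $\mathbb R^n$, and both $u$ and $u\phi$ vanish on $\Omega$. Restricting the identity to $\Omega$ thus annihilates every multiplier term and reduces it to $L_\sigma u(x) = \phi(x) L_{\sigma'} u(x)$ for $x \in \Omega$. Writing out the principal value integrals (which collapse to ordinary ones since $u(x)=0$) and invoking arbitrariness of $u \in C^\infty_c(\Omega_e)$ yields
\[ \sigma(x,y) = \phi(x)\sigma'(x,y) \quad \text{in } \mathcal D'(\Omega \times \Omega_e). \]
Second, I take $u \in C^\infty_c(\Omega)$ and restrict the identity to $\Omega_e$, where now $\phi=1$, $u=0$ and $u\phi=0$ simultaneously wipe out the multiplier terms. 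The residual relation $L_\sigma(u\phi)(x) = L_{\sigma'} u(x)$ for $x \in \Omega_e$ unfolds to $\sigma(x,y)\phi(y) = \sigma'(x,y)$ on $\Omega_e \times \Omega$, and swapping the roles of $x$ and $y$ using the symmetry of $\sigma$ and $\sigma'$ from Lemma \ref{lemsigma} rewrites this as $\phi(x)\sigma(x,y) = \sigma'(x,y)$ on $\Omega \times \Omega_e$.

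Combining the two resulting identities on $\Omega \times \Omega_e$ gives $(1-\phi(x)^2)\sigma(x,y) = 0$ distributionally, and since $\sigma \geq 1$ by Lemma \ref{lemsigma} and property (p3), this forces $\phi(x)^2 = 1$ on $\Omega$; smoothness and strict positivity of $\phi \in G$ then yield $\phi \equiv 1$ on $\Omega$, hence on $\mathbb R^n$, contradicting $\phi \not\equiv 1$. I expect the main obstacle to be the rigorous justification of the pointwise-looking identities in $\sigma$ and $\sigma'$, which are a priori only distributions on $\mathbb R^{2n}$: each step must be read distributionally, and one has to verify that the singular integrals and multiplier terms are well-defined when paired with test functions in $C^\infty_c(\Omega_e)$ or $C^\infty_c(\Omega)$ under the integrability assumptions (p1)--(p4) defining $\mathcal P_0$.
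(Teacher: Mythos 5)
Your proof is correct, and it reaches the conclusion by a genuinely different mechanism than the paper, although both start from the same decomposition \((-\Delta)^s_A = L_\sigma + Q_A\) furnished by Lemma \ref{lemform1}, Lemma \ref{lemsigma} and Remark \ref{lapcond}. The paper does not separate supports: for an \emph{arbitrary} fixed \(x\in\mathbb R^n\) it inserts the special function \(u(y)=\psi(y)e^{-1/|x-y|}|x-y|^{n+2s}\) (the same trick as in Lemma \ref{charofnewg}), whose infinite-order vanishing at \(y=x\) simultaneously removes the principal value and the zeroth-order terms; this yields \(\sigma(x,y)\phi(y)=\sigma'(x,y)\phi(x)\) for \emph{all} \(x\) and \(y\neq x\), after which symmetrizing in \((x,y)\) gives first \(\sigma=\sigma'\) and then \(\sigma(x,y)\left(\phi(y)-\phi(x)\right)=0\), i.e. \(\phi\) constant, with the condition \(\phi|_{\Omega_e}=1\) used only at the very last step. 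You instead exploit the exterior normalization of \(\phi\) from the outset: testing with \(u\in C^\infty_c(\Omega_e)\) against \(\Omega\) and with \(u\in C^\infty_c(\Omega)\) against \(\Omega_e\) kills every multiplier term by support separation, and since \(\mathrm{supp}(u)\) and the test region are at strictly positive distance, the kernel is nonsingular and the pairing reduces to the standard disjoint-support formula \(\langle L_\sigma u,\psi\rangle = -\mathcal C_{n,s}\int\int \sigma(x,y)\,|x-y|^{-n-2s}\,u(y)\psi(x)\,dy\,dx\), which together with the local integrability of \(\sigma-1\) (it equals a fixed weight times \(A_{a\parallel}\in L^2(\mathbb R^{2n})\)) justifies the distributional steps you flag. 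This produces \(\sigma=\phi(x)\sigma'\) and, via the symmetry of \(\sigma,\sigma'\), \(\sigma'=\phi(x)\sigma\) on \(\Omega\times\Omega_e\), whence \((1-\phi(x)^2)\sigma=0\) and \(\phi\equiv 1\) by \(\sigma\geq 1\), \(\phi>0\) and continuity. The paper's route buys more along the way: the relation on all of \(\mathbb R^{2n}\) and the intermediate identity \(\sigma=\sigma'\) (not needed for this lemma, but parallel to Lemma \ref{charofnewg}), and it shows \(\phi\) must be constant before the exterior condition is even invoked. Your route buys robustness: it never evaluates the singular integral at or near the diagonal and needs no vanishing-order trick, at the price of only recovering information on \(\Omega\times\Omega_e\) — which is exactly enough, since \(\sigma\geq 1\) there forces \(\phi^2=1\) on \(\Omega\). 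Neither proof uses \emph{(p5)}, and both rely on the same ingredients (symmetry of \(\sigma,\sigma'\), \(\sigma\geq 1\), \(\phi>0\), \(\phi|_{\Omega_e}=1\)), so your argument is a valid substitute for the paper's.
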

\begin{proof}
Let $(A,q), (A',q') \in {\cal P}_0$ such that $(A,q)\approx(A',q')$. Then there exists $\phi\in G$ such that $(-\Delta)^s_A (u\phi) + qu\phi = \phi( (-\Delta)^s_{A'} u + q'u)$ for all $u\in H^s$. Fix $\psi \in C^{\infty}_c(\mathbb R^n)$, $x\in \mathbb R^n$ and $u(y):= \psi(y)e^{-1/|x-y|} |x-y|^{n+2s}$ as in Lemma 3.8. Then $u\in \cal S$, and by Lemma 3.3 and Remark 3.5, 
\begin{align*}
0 & = \mathcal C_{n,s} \; PV\int_{\mathbb R^n} \left( \sigma(x,y) \frac{u(x)\phi(x)-u(y)\phi(y)}{|x-y|^{n+2s}} - \sigma'(x,y)\frac{u(x)\phi(x)-u(y)\phi(x)}{|x-y|^{n+2s}} \right) \, dy \\ & \;\;\;\; +  u(x)\phi(x)(Q-Q') \\ & = \mathcal C_{n,s} \; PV\int_{\mathbb R^n} \frac{u(y)}{|x-y|^{n+2s}} \left( \sigma'(x,y)\phi(x)-\sigma(x,y) \phi(y) \right) \, dy \\ & = \mathcal C_{n,s} \int_{\mathbb R^n} \psi(y)e^{-1/|x-y|} \left( \sigma'(x,y)\phi(x)-\sigma(x,y) \phi(y) \right) \, dy\;.
\end{align*}

\noindent Here the principal value disappears because the integral is not singular. Given the arbitrarity of $\psi$ and the non negativity of the exponential, we deduce $\sigma(x,y)\phi(y) = \sigma'(x,y)\phi(x)$ for all $y\neq x$. On the other hand, since $\sigma, \sigma'$ are symmetric and $\phi>0$, by taking the symmetric part of each side  
\spleqno{
\sigma(x,y)\frac{\phi(x)+\phi(y)}{2} = ( \sigma(x,y)\phi(y) )_s = ( \sigma'(x,y)\phi(x) )_s = \sigma'(x,y)\frac{\phi(x)+\phi(y)}{2}\;.
}
\noindent This implies $\sigma=\sigma'$, and the equation can be rewritten as $\sigma(x,y) (\phi(y)-\phi(x))=0$. Being $\sigma>0$, it is clear that $\phi$ must be constant, and therefore equal to $1$. This means that whenever $(A,q), (A',q') \in {\cal P}_0$ are such that $(A,q)\approx(A',q')$ with some $\phi\in G$, then $\phi\equiv 1$, i.e. FMSE does not have the gauge $\approx$.  
\end{proof}

By the last two Lemmas, FMSE enjoys $\sim$, but not $\approx$. Observe that the reverse is true for the classical magnetic Schr\"odinger equation. This surprising difference is due to the non-local nature of the operators involved: FMSE has $\sim$ because the coefficient of its gradient term is not the whole vector potential $A$, as in the classical case, but just a part of it. On the other hand, the restriction imposed by the antisymmetry of such part motivates the absence of $\approx$.


\para{Bilinear form} Let $s\in(0,1),\; u,v\in H^s(\mathbb R^n)$, and define the \emph{bilinear form} $B^s_{A,q} : H^s \times H^s \rightarrow \mathbb R$ as follows:
\begin{equation*}
B^s_{A,q} [u,v] = \int_{\mathbb R^n}\int_{\mathbb R^n} \nabla^s_A u \cdot \nabla^s_A v \,dy dx + \int_{\mathbb R^n} quv \,dx\;.
\end{equation*}

\noindent Observe that by Fubini's theorem and Lemmas 3.3, 3.4
\begin{align*}
B^s_{A,q} [u,u] &= \langle(-\Delta)^s u, u\rangle + 2\langle \nabla^s u, A_{a\parallel}u \rangle + \langle Qu,u\rangle \\ & = \langle\nabla^s u, \nabla^su\rangle + 2\langle \nabla^s u, \alpha(\sigma-1)u \rangle + \langle Qu,u\rangle \\ & = \langle\nabla^s u, \nabla^su+ (\sigma-1)\alpha (u(x)-u(y))\rangle +\langle Qu,u\rangle \\ & = \langle\nabla^s u,\sigma\nabla^s u\rangle+\langle Qu,u\rangle \;.
\end{align*}

\noindent Since again by Lemma 3.4 we have $\sigma>1$, for the first term
$$ \langle\nabla^s u,\sigma\nabla^s u\rangle = \int_{\mathbb R^{2n}}\sigma |\nabla^s u|^2 \, dydx \geq \int_{\mathbb R^{2n}} |\nabla^s u|^2 \, dydx = \langle(-\Delta)^s u, u\rangle\;,$$
\noindent and thus $B^s_{A,q} [u,u] \geq B^s_{0,Q}[u,u]$. Now Lemma 2.6 from \cite{RS2017} gives the well-posedness of the direct problem for FMSE, in the assumption that $0$ is not an eigenvalue for the equation:  if $F \in (\tilde H^s(\Omega))^*$ then there exists a unique solution $u_F \in H^s(\Omega)$ to $B^s_{A,q}[u,v]=F(v), \; \forall v\in \tilde H^s(\Omega)$, that is a unique $u_F\in H^s(\Omega)$ such that $(-\Delta)^s_A u +qu =F$ in $\Omega$, $u_F|_{\Omega_e}=0$. For non-zero exterior value, see e.g. \cite{Co18} and \cite{GSU2017}; one also gets the estimate   
\begin{equation}\label{eq:estidirect}\|u_f\|_{H^s(\mathbb R^n)} \leq c(\|F\|_{(\tilde H^s(\Omega))^*} + \|f\|_{H^s(\mathbb R^n)})\;.\end{equation} 

\begin{Lem}\label{Lembil} Let $v,w\in H^s(\mathbb R^n)$, $f,g\in H^s(\Omega_e)$ and $u_f, u_g\in H^s(\mathbb R^n)$ be such that $((-\Delta)^s_A + q) u_f =((-\Delta)^s_A + q) u_g = 0$ in $\Omega$, $u_f|_{\Omega_e} = f$ and $u_g|_{\Omega_e} = g$. Then
\begin{enumerate}
\item $B^s_{A,q}[v,w] = B^s_{A,q}[w,v]\;$ (symmetry),
\item $|B^s_{A,q}[v,w]| \leq k\|v\|_{H^s(\mathbb R^n)}\|w\|_{H^s(\mathbb R^n)}\;$,
\item $B^s_{A,q}[u_f,e_g] = B^s_{A,q}[u_g,e_f]\;$,
\end{enumerate}
\noindent where $e_g, e_f \in H^s(\mathbb R^n)$ are extensions of $g,f$ respectively.
\end{Lem}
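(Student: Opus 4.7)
Parts (1) and (2) amount to bookkeeping, while part (3) is the Alessandrini-type identity that follows from the weak formulation combined with (1).

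For part (1), at each point $(x,y)$ the integrand $\nabla^s_A v(x,y)\cdot\nabla^s_A w(x,y) + q(x)v(x)w(x)$ is manifestly symmetric under $v\leftrightarrow w$, since the dot product of two vectors commutes and $q$ is scalar. Fubini then yields $B^s_{A,q}[v,w]=B^s_{A,q}[w,v]$.

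For part (2), I would expand $\nabla^s_A u = \nabla^s u + A(x,y)u(x)$ and split the form into five pieces:
$$B^s_{A,q}[v,w] = \langle\nabla^s v,\nabla^s w\rangle + \langle\nabla^s v, Aw\rangle + \langle Av, \nabla^s w\rangle + \langle Av, Aw\rangle + \langle qv, w\rangle.$$
The first piece is dominated by $\|\nabla^s v\|_{L^2}\|\nabla^s w\|_{L^2}\leq c\|v\|_{H^s}\|w\|_{H^s}$ via Cauchy--Schwarz and Lemma \ref{extgrad}. For the three middle pieces, the estimate \eqref{usingp1} together with (p1) gives $\|Av\|_{L^2(\mathbb R^{2n})}\leq k\|v\|_{H^s}\|\mathcal J_2A\|_{L^{2p}}$, and one more Cauchy--Schwarz closes each term. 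For the potential term, (e1) places $vw \in L^{n/(n/2+sp-2s)}$, whose conjugate exponent is exactly $p$ (a direct check in both cases $p=2$ and $p=n/(2s)$), so H\"older combined with (p4) gives $|\langle qv,w\rangle|\leq c\|q\|_{L^p}\|v\|_{H^s}\|w\|_{H^s}$. Summing the five estimates yields the desired continuity constant.

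For part (3), the key observation is that $u_f-e_f$ and $u_g-e_g$ both lie in $\tilde H^s(\Omega)$, since $u_f=f=e_f$ and $u_g=g=e_g$ on $\Omega_e$ by construction. The weak formulation \eqref{FMSE}, rewritten via \eqref{deflapA} as a statement about the bilinear form, tells us that $B^s_{A,q}[u_h,\phi]=0$ for every $\phi\in\tilde H^s(\Omega)$ and every $h\in\{f,g\}$. Setting $\phi=e_g-u_g$ gives
$$B^s_{A,q}[u_f,e_g]=B^s_{A,q}[u_f,u_g],$$
and analogously $B^s_{A,q}[u_g,e_f]=B^s_{A,q}[u_g,u_f]$. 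Applying the symmetry of part (1) to identify the two right-hand sides closes the chain. The only place that requires genuine care is matching the Sobolev embeddings of Lemma 2.5 to the potential term in part (2); once the exponents are checked, all estimates reduce to Cauchy--Schwarz or H\"older, and the identity in (3) is a formal manipulation.
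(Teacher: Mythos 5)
Your proposal is correct and takes essentially the same route as the paper: symmetry from the definition, continuity by estimating the magnetic gradient and potential terms, and part (3) via the chain $B^s_{A,q}[u_f,e_g]=B^s_{A,q}[u_f,u_g]=B^s_{A,q}[u_g,u_f]=B^s_{A,q}[u_g,e_f]$, using that the solution pairing annihilates $\tilde H^s(\Omega)$ (the paper phrases this as reducing the integral to $\Omega_e$, where $u_g=e_g$). Your five-way split and the (e1)+H\"older exponent check in part (2) merely unwind the paper's one-line bounds $\|\nabla^s_A v\|_{L^2}\leq k'\|v\|_{H^s}$ (from \eqref{usingp1}) and $\|qv\|_{H^{-s}}\leq k''\|q\|_{L^p}\|v\|_{H^s}$ (from \emph{(e2)} and \emph{(e7)}), so the two arguments are equivalent.
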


\begin{proof} Symmetry follows immediately from the definition. For the second point, use \emph{(e2)}, \emph{(e7)} and the definition of magnetic fractional gradient to write 
\spleqno{
|B^s_{A,q}[v,w]| & = |\langle \nabla^s_A v, \nabla^s_A w \rangle + \langle qv,w \rangle| \leq |\langle \nabla^s_A v, \nabla^s_A w \rangle| + |\langle qv,w \rangle| \\ & \leq \|\nabla^s_A v\|_{L^2} \|\nabla^s_A w\|_{L^2} + \|qv\|_{H^{-s}} \|w\|_{H^s} \\ & \leq k'\|v\|_{H^s} \|w\|_{H^s} + k''\|q\|_{L^{p}} \|v\|_{H^s} \|w\|_{H^s} \leq k \|v\|_{H^s} \|w\|_{H^s}\;.  
}
\noindent For the third point, first compute 
\spleqno{
B^s_{A,q}[u_f, u_g] & = \int_{\mathbb R^n} ((-\Delta)^s_A u_f + q u_f) u_g \,dx = \int_{\Omega_e} ((-\Delta)^s_A u_f + q u_f) u_g \,dx \\ & = \int_{\Omega_e} ((-\Delta)^s_A u_f + q u_f) e_g \,dx = B^s_{A,q}[u_f, e_g]\;,  }
\noindent and then $ B^s_{A,q}[u_f, e_g] = B^s_{A,q}[u_f, u_g] = B^s_{A,q}[u_g, u_f] = B^s_{A,q}[u_g, e_f]$.
\end{proof}


\para{The DN-map and the integral identity} 

\begin{Lem}\label{DNLem}
There exists a bounded, linear, self-adjoint map $\Lambda_{A,q}^s : X\rightarrow X^*$ defined by $$\langle \Lambda_{A,q}^s [f],[g] \rangle = B^s_{A,q}[u_f,g], \;\;\;\;\;\;\; \forall f,g\in H^s(\mathbb R^n)\; ,$$ \noindent where $X$ is the abstract quotient space $H^s(\mathbb R^n)/\tilde H^s(\Omega)$ and $u_f\in H^s(\mathbb R^n)$ solves $\mathrm (-\Delta)^s_A u_f + qu_f = 0$ in $\Omega$ with $u-f\in \tilde H^s(\Omega)$.
\end{Lem}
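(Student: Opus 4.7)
The plan is to check the four required properties in sequence: well-definedness on the quotient space $X$, linearity, boundedness, and self-adjointness.

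First I would verify that the pairing $\langle \Lambda^s_{A,q}[f],[g]\rangle := B^s_{A,q}[u_f,g]$ does not depend on the choice of representatives. For the first slot, suppose $f'\in H^s(\mathbb R^n)$ with $f-f'\in\tilde H^s(\Omega)$. Then $u_f-f'=(u_f-f)+(f-f')\in\tilde H^s(\Omega)$, and $u_f$ solves $(-\Delta)^s_A u+qu=0$ in $\Omega$; by the uniqueness part of the well-posedness statement following \eqref{eq:estidirect}, it follows that $u_f=u_{f'}$. For the second slot, if $g-g'\in\tilde H^s(\Omega)$, then $B^s_{A,q}[u_f,g]-B^s_{A,q}[u_f,g']=B^s_{A,q}[u_f,g-g']=0$, since $u_f$ solves FMSE weakly in $\Omega$, i.e. $B^s_{A,q}[u_f,\phi]=0$ for every $\phi\in\tilde H^s(\Omega)$.

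Linearity then follows immediately: the assignment $f\mapsto u_f$ is linear (again by well-posedness applied to $\alpha f_1+\beta f_2$ and uniqueness of the solution $\alpha u_{f_1}+\beta u_{f_2}$), and $B^s_{A,q}$ is bilinear. For boundedness, I would combine the second point of Lemma \ref{Lembil} with the direct problem estimate \eqref{eq:estidirect} (applied with $F=0$):
\begin{equation*}
|\langle \Lambda^s_{A,q}[f],[g]\rangle|=|B^s_{A,q}[u_f,g]|\leq k\,\|u_f\|_{H^s(\mathbb R^n)}\|g\|_{H^s(\mathbb R^n)}\leq c\,\|f\|_{H^s(\mathbb R^n)}\|g\|_{H^s(\mathbb R^n)}.
\end{equation*}
Since the representatives $f,g$ of $[f],[g]$ are arbitrary, taking the infimum over extensions yields $|\langle \Lambda^s_{A,q}[f],[g]\rangle|\leq c\,\|[f]\|_X\|[g]\|_X$, which gives the desired bound on the operator norm.

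Finally, for self-adjointness I would invoke the third point of Lemma \ref{Lembil}: choosing $e_g$ and $e_f$ as arbitrary extensions of $g|_{\Omega_e}$ and $f|_{\Omega_e}$, we obtain
\begin{equation*}
\langle \Lambda^s_{A,q}[f],[g]\rangle=B^s_{A,q}[u_f,e_g]=B^s_{A,q}[u_g,e_f]=\langle \Lambda^s_{A,q}[g],[f]\rangle,
\end{equation*}
which is exactly the self-adjoint identity on $X$. I expect the main (and only non-trivial) point to be the well-definedness of the second argument, as it is the step where the fact that $u_f$ weakly solves FMSE is essentially used; the remaining items are routine consequences of Lemmas \ref{Lembil} and the energy estimate \eqref{eq:estidirect}.
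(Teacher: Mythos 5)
Your proposal is correct and follows essentially the same route as the paper's proof: well-definedness via the weak solution property and uniqueness from well-posedness, boundedness via Lemma \ref{Lembil} together with the estimate \eqref{eq:estidirect} followed by taking infima over representatives, and self-adjointness via the third point of Lemma \ref{Lembil}. Your phrasing of well-definedness (testing $B^s_{A,q}[u_f,\cdot]$ against $g-g'\in\tilde H^s(\Omega)$ and invoking uniqueness for the first slot) is a slightly more explicit rendering of the paper's computation, which packages the same facts into the identity $B^s_{A,q}[u_{f+\phi},g+\psi]=\int_{\Omega_e}g\,((-\Delta)^s_A+q)u_f\,dx$; the content is identical.
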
  

\begin{proof}
We first prove that the tentative definition of the DN-map does not depend on the representatives of the equivalence classes involved. Let $\phi,\psi\in \tilde H^s(\Omega)$ and compute by Lemma \ref{Lembil} 
\spleqno{
B^s_{A,q}[u_{f+\phi}, g+\psi] & = \int_{\Omega_e} (g+\psi) ((-\Delta)^s_A + q)u_{f+\phi} \, dx \\ & = \int_{\Omega_e} g ((-\Delta)^s_A + q)u_{f} \, dx = B^s_{A,q}[u_{f}, g]\;.
}
\noindent The $\psi$ disappears because it vanishes in $\Omega_e$, while the $\phi$ plays actually no role, since $f=f+\phi$ over $\Omega_e$ implies $u_{f+\phi}=u_f$. The boundedness of $\Lambda_{A,q}^s$ follows from \ref{Lembil} and \eqref{eq:estidirect}: first compute 
\spleqno{
|\langle \Lambda_{A,q}^s [f],[g] \rangle| = |B^s_{A,q}[u_f,g]| \leq k\|u_f\|_{H^s}\|g\|_{H^s} \leq c\|f\|_{H^s}\|g\|_{H^s}\;,
}
\noindent for all $f\in [f],\;g\in [g]$, and then observe that this implies $$ |\langle \Lambda_{A,q}^s [f],[g] \rangle| \leq k \inf_{f\in [f]}\|f\|_{H^s}\inf_{g\in [g]}\|g\|_{H^s}= k\|[f]\|_X\|[g]\|_X\;. $$
\noindent Finally, we prove the self-adjointness using Lemma \ref{Lembil} again: $$ \langle \Lambda_{A,q}^s [f],[g] \rangle = B^s_{A,q} [u_f,e_g] = B^s_{A,q}[u_g,e_f] = \langle \Lambda_{A,q}^s [g],[f] \rangle = \langle [f],\Lambda_{A,q}^s [g] \rangle\;.\qedhere $$
\end{proof}
\vspace{2mm}

The DN-map will now be used to prove an integral identity. 

\begin{Lem}
Let $(A_1, q_1), (A_2, q_2) \in {\cal P}, \;f_1, f_2$ be exterior data belonging to $H^s(\mathbb R^n)$ and $u_i \in H^s(\mathbb R^n)$ be the solution of $(-\Delta)^s_{A_i} u_i + q_i u_i = 0$ with $u_i - f_i \in \tilde H^s(\Omega)$ for $i=1,2$. The following integral identity holds:
\spleq{intid}{
\langle (\Lambda^s_{A_1,q_1} & - \Lambda^s_{A_2,q_2}) f_1, f_2 \rangle = \\ & = 2\Big\langle \int_{\mathbb R^n} ((A_1)_{a\parallel} - (A_2)_{a\parallel})\cdot \nabla^su_1\, dy, u_2 \rangle + \langle (Q_1-Q_2)u_1, u_2 \Big\rangle\;.
} 
\end{Lem}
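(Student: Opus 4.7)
The plan is to turn the pairing $\langle(\Lambda^s_{A_1,q_1}-\Lambda^s_{A_2,q_2})f_1,f_2\rangle$ into a difference of bilinear forms on the same pair $(u_1,u_2)$, and then use Lemma 3.3 to expand each bilinear form so that the contributions depending only on the fractional Laplacian cancel.

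First I would apply the definition of the DN map from Lemma \ref{DNLem}. By construction $\langle \Lambda^s_{A_1,q_1}f_1,f_2\rangle=B^s_{A_1,q_1}[u_1,e_{f_2}]$ for any $e_{f_2}\in H^s(\mathbb R^n)$ with $e_{f_2}|_{\Omega_e}=f_2$. Since $u_2-f_2\in\tilde H^s(\Omega)$, the function $u_2$ itself is such an extension, and the proof of Lemma \ref{DNLem} shows that this replacement is legal precisely because $u_1$ solves $(-\Delta)^s_{A_1}u_1+q_1u_1=0$ in $\Omega$ (so the difference $B^s_{A_1,q_1}[u_1,u_2-e_{f_2}]$ vanishes). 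Hence $\langle \Lambda^s_{A_1,q_1}f_1,f_2\rangle=B^s_{A_1,q_1}[u_1,u_2]$. For the second term I would invoke the self-adjointness part of Lemma \ref{DNLem}: $\langle\Lambda^s_{A_2,q_2}f_1,f_2\rangle=\langle f_1,\Lambda^s_{A_2,q_2}f_2\rangle=B^s_{A_2,q_2}[u_2,u_1]$ by the same argument with the roles reversed, and then Lemma \ref{Lembil}(1) gives $B^s_{A_2,q_2}[u_2,u_1]=B^s_{A_2,q_2}[u_1,u_2]$. Subtracting yields
\begin{equation*}
\langle(\Lambda^s_{A_1,q_1}-\Lambda^s_{A_2,q_2})f_1,f_2\rangle=B^s_{A_1,q_1}[u_1,u_2]-B^s_{A_2,q_2}[u_1,u_2].
\end{equation*}

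Next I would expand each bilinear form. Since $B^s_{A,q}[u,v]=\langle\nabla^s_A u,\nabla^s_A v\rangle+\langle qu,v\rangle=\langle(-\Delta)^s_A u+qu,v\rangle$ by the defining relations (\ref{eq:defdiv}) and (\ref{deflapA}), Lemma \ref{lemform1} rewrites this as
\begin{equation*}
B^s_{A,q}[u,v]=\langle(-\Delta)^s u,v\rangle+2\Bigl\langle\int_{\mathbb R^n}A_{a\parallel}\cdot\nabla^s u\,dy,\,v\Bigr\rangle+\langle Qu,v\rangle,
\end{equation*}
with $Q=q+\int_{\mathbb R^n}|A|^2\,dy+(\nabla\cdot)^sA_{s\parallel}$ as in the statement. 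Applying this identity to $(A_1,q_1)$ and $(A_2,q_2)$ with $u=u_1$, $v=u_2$ and subtracting, the $(-\Delta)^s u_1$ term is independent of $(A,q)$ and therefore drops out, while the two remaining terms combine to give exactly
\begin{equation*}
2\Bigl\langle\int_{\mathbb R^n}((A_1)_{a\parallel}-(A_2)_{a\parallel})\cdot\nabla^s u_1\,dy,\,u_2\Bigr\rangle+\langle(Q_1-Q_2)u_1,u_2\rangle,
\end{equation*}
which matches the right-hand side of \eqref{intid}.

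The only genuinely subtle step is the first one: justifying that $u_2$ may be used in place of an arbitrary extension of $f_2$ inside $B^s_{A_1,q_1}[u_1,\cdot]$ (and symmetrically for $u_1$ and $f_1$). This is where the properties of $\mathcal P$ and the weak formulation of FMSE enter: the difference of any two extensions lies in $\tilde H^s(\Omega)$, and $u_1$ annihilates such functions through the bilinear form since $((-\Delta)^s_{A_1}+q_1)u_1=0$ in $\Omega$, as already exploited in Lemma \ref{DNLem}. Once this representative-independence is in hand, the rest of the argument is a direct application of Lemma \ref{lemform1} and linearity.
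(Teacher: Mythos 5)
Your proposal is correct and follows essentially the same route as the paper: reduce the pairing to $B^s_{A_1,q_1}[u_1,u_2]-B^s_{A_2,q_2}[u_1,u_2]$ via Lemma \ref{DNLem}, then expand both forms with Lemma \ref{lemform1} so that the $\langle\nabla^s u_1,\nabla^s u_2\rangle$ terms cancel. In fact you are slightly more careful than the paper's one-line computation, since you make explicit the self-adjointness/symmetry step needed to write $\langle\Lambda^s_{A_2,q_2}f_1,f_2\rangle$ as $B^s_{A_2,q_2}[u_1,u_2]$ (note $u_1$ does not solve the $(A_2,q_2)$ equation), a point the paper leaves implicit.
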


\begin{proof}
The proof is a computation based on the results of Lemmas \ref{DNLem} and 3.3:
\spleqno{
\langle (\Lambda^s_{A_1,q_1} & - \Lambda^s_{A_2,q_2}) f_1, f_2 \rangle = B^s_{A_1, q_1}[u_1,u_2] - B^s_{A_2,q_2}[u_1,u_2] \\ & = \langle \nabla^s u_1, \nabla^s u_2 \rangle + 2\Big\langle \int_{\mathbb R^n} (A_1)_{a\parallel} \cdot \nabla^su_1\, dy, u_2 \Big\rangle + \langle Q_1u_1, u_2 \rangle - \\ & \;\;\;\;\; - \langle \nabla^s u_1, \nabla^s u_2 \rangle - 2\Big\langle \int_{\mathbb R^n} (A_2)_{a\parallel} \cdot \nabla^su_1\, dy, u_2 \Big\rangle - \langle Q_2u_1, u_2 \rangle \\ & = 2\Big\langle \int_{\mathbb R^n} ((A_1)_{a\parallel} - (A_2)_{a\parallel})\cdot \nabla^su_1\, dy, u_2 \Big\rangle + \langle (Q_1-Q_2)u_1, u_2 \rangle\;.\qedhere }
\end{proof}


\para{The WUCP and the RAP} Let $W \subseteq \Omega_e$ be open and $u\in H^s(\mathbb R^n)$ be such that $u=0$ and $(-\Delta)^s_A u + qu = 0$ in $W$. If this implies that $u=0$ in $\Omega$ as well, we say that FMSE has got the WUCP. It is known that WUCP holds if both $A$ and $q$ vanish, that is, in the case of the fractional Laplace equation (see \cite{RS2017}).

Let $\mathcal R = \{ u_f|_{\Omega}, f \in C^\infty_c(W) \} \subset L^2(\Omega)$ be the set of the restrictions to $\Omega$ of those functions $u_f$ solving FMSE for some smooth exterior value $f$ supported in $W$. If $\cal R$ is dense in $L^2(\Omega)$, we say that FMSE has got the RAP.

\begin{Rem}
The WUCP and the RAP are non-local properties. For example, the RAP shows a certain freedom of the solutions to fractional PDEs, since it states that they can approximate any $L^2$ function. This is not the case for a local operator, e.g. the classical Laplacian, whose solutions are much more rigid.
\end{Rem}     

\begin{Lem}
The WUCP implies the RAP in the case of FMSE.
\end{Lem}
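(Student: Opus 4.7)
The plan is to argue by contradiction via Hahn--Banach, reducing density of $\cal R$ in $L^2(\Omega)$ to a WUCP statement for an ``adjoint'' solution. Suppose $\cal R$ is not dense; then there exists $v\in L^2(\Omega)$, $v\not\equiv 0$, with $\int_\Omega u_f\, v \, dx=0$ for every $f\in C^\infty_c(W)$. Extend $v$ by zero to $\mathbb R^n$. Using the well-posedness of the direct problem recorded just before Lemma \ref{Lembil} (which applies to the adjoint side by the symmetry of $B^s_{A,q}$), produce the unique $\phi\in \tilde H^s(\Omega)$ solving
\begin{equation*}
 (-\Delta)^s_A \phi + q\phi = v \;\;\text{in } \Omega, \qquad \phi=0 \;\;\text{in } \Omega_e,
\end{equation*}
in the weak sense $B^s_{A,q}[\phi,\psi]=\int_\Omega v\psi \, dx$ for all $\psi\in\tilde H^s(\Omega)$.

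The second step is a bilinear-form manipulation. Since $u_f$ solves FMSE and $\phi\in\tilde H^s(\Omega)$, the weak formulation gives $B^s_{A,q}[u_f,\phi]=0$. Splitting $u_f=f+(u_f-f)$ with $u_f-f\in\tilde H^s(\Omega)$ and testing the adjoint problem against $u_f-f$, one obtains
\begin{equation*}
 B^s_{A,q}[\phi,u_f-f]=\int_\Omega v(u_f-f)\, dx=\int_\Omega v\, u_f\, dx=0,
\end{equation*}
using $f\equiv 0$ on $\Omega$ and the Hahn--Banach orthogonality. Combined with the symmetry of $B^s_{A,q}$ (Lemma \ref{Lembil}(1)), this forces $B^s_{A,q}[\phi,f]=0$ for every $f\in C^\infty_c(W)$.

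The third step is to convert this into a pointwise equation on $W$, and here property \emph{(p5)} is essential. Since $\mathrm{supp}(A)\subseteq\Omega^2$ and $f$ is supported in $W\subset\Omega_e$, the magnetic perturbation $A(x,y)f(x)$ vanishes identically, so $\nabla^s_A f=\nabla^s f$; moreover the magnetic term $A(x,y)\phi(x)$ in $\nabla^s_A\phi$ is supported in $\Omega\times\Omega$, where $\nabla^s f=(f(x)-f(y))\alpha(x,y)\equiv 0$. The potential contribution $\langle q\phi,f\rangle$ also vanishes by disjoint supports. Therefore
\begin{equation*}
 0=B^s_{A,q}[\phi,f]=\langle \nabla^s\phi,\nabla^s f\rangle = \langle (-\Delta)^s\phi,f\rangle \qquad \forall f\in C^\infty_c(W),
\end{equation*}
so $(-\Delta)^s\phi=0$ on $W$. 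Since $A$ and $q$ vanish on $W$ by \emph{(p4)}, \emph{(p5)}, this is equivalent to $(-\Delta)^s_A\phi+q\phi=0$ on $W$; and $\phi\equiv 0$ on $W\subset\Omega_e$ because $\phi\in\tilde H^s(\Omega)$.

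Applying the assumed WUCP to $\phi$ then yields $\phi\equiv 0$ on $\Omega$, hence on all of $\mathbb R^n$. The adjoint equation immediately gives $v\equiv 0$, contradicting the choice of $v$. The main delicate point I expect to spend care on is the third step: the collapse of $B^s_{A,q}[\phi,f]$ to $\langle (-\Delta)^s\phi,f\rangle$ on $W$ relies completely on assumption \emph{(p5)}, as the author already hints. Without it the cross terms involving $A_{s\parallel}$, $A_{a\parallel}$ and $\mathcal J_2 A$ would propagate into the exterior and one would not obtain a clean equation on $W$ to which WUCP can be directly applied.
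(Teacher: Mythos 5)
Your proof is correct and follows essentially the same route as the paper's: a Hahn--Banach duality argument, solving $(-\Delta)^s_A\phi + q\phi = v$ in $\Omega$ with $\phi\in\tilde H^s(\Omega)$ (legitimate, as you note, by the symmetry of $B^s_{A,q}$ and the well-posedness result), showing $B^s_{A,q}[\phi,f]=0$ for all $f\in C^\infty_c(W)$, and invoking the WUCP to force $\phi\equiv 0$ and hence $v\equiv 0$. The one substantive difference is your third step, and it is a detour the paper avoids: since by the definitions \eqref{deflapA} and \eqref{FMSE} one has $B^s_{A,q}[\phi,f] = \langle (-\Delta)^s_A\phi + q\phi, f\rangle$ for every test function, the identity $B^s_{A,q}[\phi,f]=0$ for all $f\in C^\infty_c(W)$ yields $(-\Delta)^s_A\phi + q\phi = 0$ in $W$ \emph{directly}, with no reference to \emph{(p5)}; in the paper the support hypothesis on $A$ is consumed only later, inside the proof that the WUCP actually holds (Lemma 4.1), where the magnetic equation on $W$ is reduced to $(-\Delta)^s\phi = 0$. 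Your hand-stripping of the magnetic terms --- killing $A(x,y)f(x)$ by disjoint supports, killing $\langle A\phi,\nabla^s f\rangle$ because $\nabla^s f$ vanishes on $\Omega^2$, and killing $\langle q\phi,f\rangle$ --- is valid in every detail, but it needlessly imports \emph{(p5)} into this lemma: as stated, ``WUCP implies RAP'' holds for all $(A,q)\in{\cal P}_0$, so your closing claim that \emph{(p5)} is essential here conflates this implication with the separate lemma that establishes the WUCP, which is where \emph{(p5)} genuinely enters.
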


\begin{proof} We follow the spirit of the analogous Lemma of \cite{GSU2017}. Let $v\in L^2(\Omega)$, and assume that $\langle v,w\rangle =0$ for all $w\in \cal R$. Then if $f\in C^\infty_c(W)$ and $\phi\in \tilde H^s(\Omega)$ solves $(-\Delta)^s_A \phi + q\phi = v$ in $\Omega$, we have
\spleqno{
0 & = \langle v, u_f|_\Omega \rangle = \langle v, u_f - f \rangle = \int_{\mathbb R^n} v(u_f-f) \, dx \\ & = \int_{\Omega} v(u_f-f) \, dx = \int_{\Omega} ((-\Delta)^s_A \phi + q\phi )(u_f-f) \, dx \\ & = \int_{\mathbb R^n} ((-\Delta)^s_A \phi + q\phi )(u_f-f) \, dx \\ & = B^s_{A,q}[\phi, u_f] - \int_{\mathbb R^n} ((-\Delta)^s_A \phi + q\phi ) f \, dx \;.
}  
\noindent However, $B^s_{A,q}[\phi, u_f] = \int_{\mathbb R^n} ((-\Delta)^s_A u_f + qu_f ) \phi \, dx = 0$, and so $\int_{\mathbb R^n} ((-\Delta)^s_A \phi + q\phi ) f \, dx=0$. Given the arbitrarity of $f\in C^\infty_c(W)$, this implies that $(-\Delta)^s_A \phi + q\phi =0$ in $W$. Now we use the WUCP: from $(-\Delta)^s_A \phi + q\phi =0$ and $\phi=0$ in $W$, an open subset of $\Omega_e$, we deduce that $\phi=0$ in $\Omega$ as well. By the definition of $\phi$ and the fact that $v\in L^2(\Omega)$ it now follows that $v\equiv 0$. Thus if $\langle v,w\rangle =0$ holds for all $w\in \cal R$, then $v\in L^2(\Omega)$ must vanish; by the Hahn-Banach theorem this implies that $\cal R$ is dense in $L^2(\Omega)$. \end{proof}


\section{Main results}

\para{The inverse problem} We prove Theorem 1.1 under the assumption $(A,q)\in\cal P$, while for all the previous results we only required $(A,q)\in {\cal P}_0$. We find that \emph{(p5)} makes physical sense, as the random walk interpretation of FMSE suggests; however, we move the consideration of the general case to future work.   

\noindent By \emph{(p5)} and Lemma 3.5 we easily deduce that $\sigma(x,y)\equiv 1$ whenever $(x,y)\not\in\Omega^2$, since in this case $A_{a\parallel}(x,y)=0$. Another consequence of \emph{(p5)} is:

\begin{Lem}
Let $(A,q)\in\cal P$. Then FMSE enjoys the WUCP.
\end{Lem}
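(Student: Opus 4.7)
The idea is to exploit property \emph{(p5)} (namely $\mathrm{supp}(A)\subseteq\Omega^2$) to show that, when testing the equation $(-\Delta)^s_A u + qu = 0$ against functions supported in $W\subseteq\Omega_e$, every magnetic and potential contribution drops out. This reduces the problem to the WUCP for the ordinary fractional Laplacian, which is the known result of \cite{Ru15}, \cite{GSU2017}. Since $q \in L^p(\Omega)$ is extended by zero outside $\Omega$, and since $u\in H^s(\mathbb R^n)$ is already given with $u=0$ in $W$, the whole task is thus to argue that the magnetic part of the operator does not see test functions localized in $W$.

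The plan is to fix $v\in C^\infty_c(W)$ and expand $\langle(-\Delta)^s_A u, v\rangle$ via Lemma 3.3 into four pieces: the pure fractional Laplacian term $\langle(-\Delta)^s u,v\rangle$, the gradient-coupling term $2\langle \int_{\mathbb R^n} A_{a\parallel}\cdot\nabla^s u\,dy, v\rangle$, the mass term $\langle u\int_{\mathbb R^n}|A|^2\,dy, v\rangle$, and the divergence term $\langle u(\nabla\cdot)^s A_{s\parallel}, v\rangle$. For the second and third, note that both integrands in the outer variable $x$ vanish whenever $x\in\mathrm{supp}(v)\subseteq W\subseteq\Omega_e$: indeed \emph{(p5)} forces $A(x,y)=0$ for every $y$ when $x\notin\Omega$, so $A_{a\parallel}(x,\cdot)\equiv 0$ and $\mathcal J_2 A(x)=0$ there.

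The divergence term is the only one requiring a non-pointwise argument, since $(\nabla\cdot)^s A_{s\parallel}$ is a genuinely non-local object and need not vanish outside $\Omega$. Here I would invoke the duality identity proved inside Lemma 3.3,
\begin{equation*}
\langle u(\nabla\cdot)^s A_{s\parallel}, v\rangle = \langle A_{s\parallel}, \nabla^s(uv)\rangle,
\end{equation*}
and then observe that $A_{s\parallel}$ is supported in $\Omega^2$ by \emph{(p5)} (taking symmetric and parallel parts preserves the support), while $\nabla^s(uv)(x,y) = (u(x)v(x)-u(y)v(y))\alpha(x,y)$ vanishes on $\Omega^2$ because $v$ is zero on $\overline\Omega$. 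Hence the pairing is zero. Finally $\langle qu, v\rangle=0$ since $q$ vanishes outside $\Omega$ and $v$ is supported in $\Omega_e$.

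Putting these together gives $\langle(-\Delta)^s u, v\rangle = 0$ for every $v\in C^\infty_c(W)$, so $(-\Delta)^s u = 0$ in $W$. Together with $u=0$ in $W$, the known WUCP for the fractional Laplacian forces $u\equiv 0$ on $\mathbb R^n$, and in particular $u=0$ in $\Omega$. The main subtlety, and the only place where some care is needed, is the treatment of the $A_{s\parallel}$ term: one must resist the temptation to evaluate $(\nabla\cdot)^s A_{s\parallel}$ pointwise on $W$ and instead use duality to transfer the support constraint onto $\nabla^s(uv)$, where the cancellation is immediate.
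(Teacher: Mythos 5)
Your proof is correct and follows essentially the same route as the paper: decompose $(-\Delta)^s_A u$ via Lemma 3.3, use \emph{(p5)} to make the magnetic contributions vanish on $W\subseteq\Omega_e$, and invoke the known WUCP for $(-\Delta)^s$. The only (harmless) difference is the divergence term, which the paper disposes of more simply by noting that it appears multiplied by $u(x)$, and $u$ vanishes on $W$ by hypothesis; your duality argument $\langle u(\nabla\cdot)^s A_{s\parallel}, v\rangle = \langle A_{s\parallel}, \nabla^s(uv)\rangle = 0$ is equally valid and in fact shows that this term vanishes on all of $\Omega_e$ without using $u|_W = 0$.
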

\begin{proof}
Suppose $W\subseteq\Omega_e$ is such that $u(x)=0$, $(-\Delta)^s_Au(x)+q(x)u(x) = 0$ when $x\in W$. Then $A_{a\parallel}(x,y)=0$, and by Lemma 3.3 $(-\Delta)^s u(x) =0$. Now the known WUCP for the fractional Laplacian (\cite{RS2017}) gives the result. \end{proof}

We are ready to solve the inverse problem, which we restate here:

\noindent \textbf{Theorem 1.1.} \emph{Let $\Omega \subset \mathbb R^n, \; n\geq 2$ be a bounded open set, $s\in(0,1)$, and let $(A_i,q_i) \in \cal P$ for $i=1,2$. Suppose $W_1, W_2\subset \Omega_e$ are open sets, and that the DN maps for the FMSEs in $\Omega$ relative to $(A_1,q_1)$ and $(A_2,q_2)$ satisfy   $$\Lambda^s_{A_1,q_1}[f]|_{W_2}=\Lambda^s_{A_2,q_2}[f]|_{W_2}, \;\;\;\;\; \forall f\in C^\infty_c(W_1)\;.$$ \noindent Then $(A_1,q_1)\sim(A_2,q_2)$, that is, the potentials coincide up to the gauge $\sim$. }

\begin{proof}
Without loss of generality, let $W_1 \cap W_2 = \emptyset$. Let $f_i \in C^\infty_c(W_i)$, and let $u_i \in H^s(\mathbb R^n)$ solve $(-\Delta)^s_{A_i} u_i + q_i u_i = 0$ with $u_i - f_i \in \tilde H^s(\Omega)$ for $i=1,2$. Knowing that the DN maps computed on $f\in C^\infty_c(W_1)$ coincide when restricted to $W_2$ and the integral identity \eqref{intid}, we write \emph{Alessandrini's identity}:
\spleq{Ale1}{ 0 & = \langle (\Lambda^s_{A_1,q_1} - \Lambda^s_{A_2,q_2}) f_1, f_2 \rangle \\ & = 2\Big\langle \int_{\mathbb R^n} ((A_1)_{a\parallel} - (A_2)_{a\parallel})\cdot \nabla^su_1\, dy, u_2 \Big\rangle + \langle (Q_1-Q_2)u_1, u_2 \rangle\;. }
  
\noindent We can refine \eqref{Ale1} by substituting every instance of $u_i$ with $u_i|_{\Omega}$. In fact, since $u_i$ is supported in $\Omega\cup W_i$ and $(\Omega\cup W_1)\cap(\Omega\cup W_2)=\Omega$,
\spleqno{ \langle (Q_1-Q_2)u_1, u_2 \rangle & = \int_{\mathbb R^n} u_1 u_2 (Q_1-Q_2)\; dx = \int_{\Omega} u_1 u_2 (Q_1-Q_2)\; dx \\ & = \int_{\Omega} u_1|_\Omega u_2|_\Omega (Q_1-Q_2)\; dx = \int_{\mathbb R^n} u_1|_\Omega u_2|_\Omega (Q_1-Q_2)\; dx .}

\noindent Moreover, by property \emph{(p5)}, 
\spleqno{
\Big\langle \int_{\mathbb R^n} & \nabla^su_1  \cdot ((A_1)_{a\parallel} - (A_2)_{a\parallel}) \, dy, u_2 \Big\rangle = \\ & = \int_{\mathbb R^n} u_2 \int_{\mathbb R^n} ((A_1)_{a\parallel} - (A_2)_{a\parallel})\cdot \nabla^su_1\, dy \, dx \\ & = \int_{\mathbb R^n} u_2(x) \int_{\mathbb R^n} (\sigma_1(x,y)-\sigma_2(x,y)) \,|\alpha|^2 (u_1(x)-u_1(y))\, dy \, dx \\ & = \int_{\Omega} (u_2|_\Omega)(x) \int_{\Omega} (\sigma_1(x,y)-\sigma_2(x,y)) \,|\alpha|^2 \Big((u_1|_\Omega)(x)-(u_1|_\Omega)(y)\Big)\, dy \, dx\;.
}

\noindent Eventually we get
\spleq{Ale2}{
0 & =2\int_{\mathbb R^n} (u_2|_\Omega)(x) \int_{\mathbb R^n} (\sigma_1(x,y)-\sigma_2(x,y)) \,|\alpha|^2 \Big((u_1|_\Omega)(x)-(u_1|_\Omega)(y)\Big)\, dy \, dx + \\& \;\;\; +  \int_{\mathbb R^n} u_1|_\Omega u_2|_\Omega (Q_1-Q_2)\; dx \;.
}

\noindent The RAP holds by Lemmas 3.18 and 3.19. Fix any $f\in L^2(\Omega)$, and let $f_i^{(k)} \in C^\infty_c(W_i)$ for $i=1,2$ and $k\in \mathbb N$ be such that $u_1^{(k)}|_\Omega\rightarrow 1$, $u_2^{(k)}|_\Omega\rightarrow f$ in $L^2$. Inserting these solutions in \eqref{Ale2} and taking the limit as $k\rightarrow\infty$ implies that $\int_{\mathbb R^n} f (Q_1-Q_2)\; dx =0$, so that, given that $f\in L^2(\Omega)$ is arbitrary, we deduce $Q_1(x) = Q_2(x)$ for $x\in\Omega$. Coming back to \eqref{Ale2}, we can write
\spleqno{ \int_{\mathbb R^n} (u_2|_\Omega)(x) & \int_{\mathbb R^n} (\sigma_1(x,y)-\sigma_2(x,y)) \frac{(u_1|_\Omega)(x)-(u_1|_\Omega)(y)}{|x-y|^{n+2s}}\, dy \, dx = 0, }

\noindent where $u_i \in H^s(\mathbb R^n)$ once again solves $(-\Delta)^s_{A_i} u_i + q_i u_i = 0$ with $u_i - f_i \in \tilde H^s(\Omega)$ for some $f_i \in C^\infty_c(W_i)$ and $i=1,2$. Choosing $u_2^{(k)}|_\Omega\rightarrow f$ in $L^2$ for some arbitrary $f\in L^2$, by the same argument 
$$ \int_{\mathbb R^n} (\sigma_1(x,y)-\sigma_2(x,y)) \frac{(u_1|_\Omega)(x)-(u_1|_\Omega)(y)}{|x-y|^{n+2s}}\, dy = 0 $$ 
	
\noindent for $x\in \Omega$. Fix now some $x \in \Omega$ and an arbitrary $\psi\in C^\infty_c(\Omega)$. Since $g(y):= \psi(y)e^{-1/|x-y|}|x - y|^{n+2s} \in {\cal S} \subset L^2(\Omega)$ as in Lemma \ref{charofnewg}, by the RAP we find a sequence $u_1^{(k)}|_\Omega \rightarrow g$. Substituting these solutions and taking the limit, 

$$ \int_{\mathbb R^n} (\sigma_1(x,y)-\sigma_2(x,y))\psi(y)e^{-1/|x-y|}\, dy = 0\;. $$

\noindent Thus we conclude that for all $x\in \Omega$ it must be $\sigma_1(x,y)=\sigma_2(x,y)$ for all $y\in\Omega$, i.e. $\sigma_1 = \sigma_2$ over $\Omega^2$. But then $\sigma_1$ and $\sigma_2$ coincide everywhere, because they are both 1 in $\mathbb R^{2n}\setminus \Omega^2$. This means that $(A_1)_{a\parallel}=(A_2)_{a\parallel}$. Moreover, since by \emph{(p2)}, \emph{(p4)} and \emph{(p5)} we have $Q_1 = 0 = Q_2$ over $\Omega_e$, by the argument above $Q_1 = Q_2$ everywhere. It thus follows from Lemma \ref{charofnewg} that $(A_1,q_1)\sim(A_2,q_2)$. 
\end{proof}


\section{A random walk interpretation for FMSE}

Diffusion phenomena can often be seen as continuous limits of random walks. The classical result for the Laplacian was extended in \cite{valdi} to the fractional one by considering long jumps. Similarly, the fractional conductivity equation was shown in \cite{Co18} to arise from a long jump random walk with weight $\gamma^{1/2}$, where $\gamma$ is the conductivity. We now show how the leading term in FMSE is itself the limit of a long jump random walk with weights. For simplicity, here we take $\sigma$ as smooth and regular as needed. Let $h>0,\; \tau=h^{2s},  \; k\in \mathbb{Z}^n$, $x\in h\mathbb{Z}^n$ and $t\in \tau\mathbb{Z}$. We consider a random walk on $h\mathbb{Z}^n$ with time steps from $\tau\mathbb{Z}$. Define
 \begin{equation*}
  f(x,k) :=
  \begin{cases}
    \sigma(x,x+hk) |k|^{-n-2s}       & \mbox{if} \quad k\neq 0 \\
    0  & \mbox{if} \quad k=0
  \end{cases}\;,
	\end{equation*}

\noindent and then observe that $\forall x\in h\mathbb{Z}^n$
\begin{equation*}
\begin{split}
\sum_{k\in\mathbb{Z}^n} f(x,k) & = \sum_{k\in\mathbb{Z}^n \setminus \{0\}} f(x,k) = \sum_{k\in\mathbb{Z}^n \setminus \{0\}} \sigma(x,x+hk) |k|^{-n-2s} \\ & \leq \|\sigma\|_{L^\infty} \sum_{k\in\mathbb{Z}^n \setminus \{0\}} |k|^{-n-2s} < \infty\;.
\end{split}
\end{equation*}
\noindent Thus we can normalize $f(x,k)$, and get the new function $P(x,k)$
\begin{equation*}
  P(x,k) :=
  \begin{cases}
    \left( \sum_{j\in\mathbb{Z}^n} f(x,j) \right)^{-1} \sigma(x,x+hk) |k|^{-n-2s}       & \mbox{if} \quad k\neq 0 \\
    0  & \mbox{if} \quad k=0
  \end{cases}\;. 
	\end{equation*}
\noindent $P(x,k)$ takes values in $[0,1]$ and verifies $\sum_{k\in\mathbb{Z}^n} P(x,k)=1$; we interpret it as the probability that a particle will jump from $x+hk$ to $x$ in the next step. 

\begin{Rem}
Let us compare $P(x,k)$ for the fractional Laplacian, conductivity and magnetic Laplacian operators. $P(x,k)$ always decreases when $k$ increases; the fractional Laplacian, which has $\sigma(x,y)\equiv 1$, treats all the points of $\mathbb R^n$ equally: no point is intrinsically more likely to be reached at the next jump; the fractional conductivity operator, which has $\sigma(x,y) = \sqrt{\gamma(x)\gamma(y)}$, distinguishes the points of $\mathbb R^n$: those with high conductivity are more likely to be reached. However, the conductivity field is independent from the current position of the particle. The magnetic fractional Laplacian operator has no special $\sigma(x,y)$ and it distinguishes the points of $\mathbb R^n$ in a more subtle way, as the conductivity field depends on the position of the particle: the same point may have high conductivity if the particle is at $x$ and a low one if it is at $y$.   
\end{Rem}

\begin{Rem}
We now see why $\sigma>0$ and $\sigma(x,y)=1$ if $(x,y)\not\in\Omega^2$: these are needed for $y\mapsto \sigma(x,y)$ to be a conductivity as in \cite{Co18} for all $x\in\mathbb R^n$. 
\end{Rem}

\noindent Let $u(x,t)$ be the probability that the particle is at point $x$ at time $t$. Then \begin{equation*} u(x, t+\tau) = \sum_{k\in\mathbb{Z}^n\setminus\{0\}}P(x,k)u(x+hk,t) \;\;.  \end{equation*}

\noindent We can compute $\partial_t u(x,t)$ as the limit for $\tau\rightarrow 0$ of the difference quotients, and then substitute the above formula (see \cite{Co18}). As the resulting sum approximates the Riemannian integral, we eventually get that for some constant $C>0$
$$ \partial_t u(x,t) = C \int_{\mathbb R^n} \sigma(x,y) \frac{u(y,t)-u(x,y)}{|x-y|^{n+2s}}\, dy\;. $$
\noindent If $u(x,t)$ is independent of $t$, the leading term of FMSE is recovered. 


\section{One slight generalization}

\noindent We now briefly consider a fractional magnetic \emph{conductivity} equation (FMCE) and show that it shares similar features as FMSE. Let $(A,q)\in \cal P$ and let $\gamma$ be a conductivity in the sense of \cite{Co18}. Consider $u\in H^s(\mathbb R^n)$. Since $\nabla^s_A: H^s(\mathbb R^n) \rightarrow L^2(\mathbb R^{2n})$, if $\Theta(x,y):= \sqrt{\gamma(x)\gamma(y)}$Id by the properties of $\gamma$ we know that $\Theta \cdot\nabla^s_A u \in L^2(\mathbb R^{2n})$. Thus we define the \emph{fractional magnetic conductivity operator} 
$$ \mathrm C^s_{\gamma,A} u(x) := (\nabla\cdot)^s_A(\Theta\cdot\nabla^s_A u)(x)\;,\;\;\;\;\;\;\;\mathrm C^s_{\gamma,A} : H^s(\mathbb R^n) \rightarrow H^{-s}(\mathbb R^n)\;.  $$
\noindent We say that $u\in H^s(\mathbb R^n)$ solves the FMCE with exterior value $f\in H^s(\Omega_e)$ if 
$$ \left\{\begin{array}{lr}
        \mathrm C^s_{\gamma,A} u(x) + q(x)u(x) = 0 & \text{in } \Omega\,\;\\
        u=f & \text{in } \Omega_e
        \end{array}\right.$$

\noindent holds in weak sense.

\begin{Lem}
Let $u\in H^s(\mathbb R^n)$, $g\in H^s(\Omega_e)$, $w=\gamma^{1/2}u$ and $f=\gamma^{1/2}g$. Moreover, let $(A,q)\in {\cal P}$ and 
\spleqno{q' := q'_{\gamma,A,q} & = \frac{q}{\gamma} - (\nabla\cdot)^s A_{s\parallel} + \frac{(\nabla\cdot)^s(A \gamma^{1/2}(y))}{\gamma^{1/2}(x)} -\frac{(-\Delta)^s(\gamma^{1/2})}{\gamma^{1/2}(x)} +\\ & + \int_{\mathbb R^n} \Big( - \frac{\nabla^s(\gamma^{1/2}) \cdot A}{\gamma^{1/2}(x)} + |A|^2 \Big(\frac{\gamma^{1/2}(y)}{\gamma^{1/2}(x)}-1\Big) \Big) \,dy\;.} 

\noindent FMCE with potentials $(A,q)$, conductivity $\gamma$ and exterior value $g$ is solved by $u$ if and only if $w$ solves FMSE with potentials $(A,q')$ and exterior value $f$, i.e.
\begin{equation*} 
\left\{\begin{array}{lr}
        \mathrm C^s_{\gamma,A} u + qu = 0 & \text{in } \Omega\,\;\\
        u=g & \text{in } \Omega_e
        \end{array}\right. \;\;\Leftrightarrow\;\; 
		\left\{\begin{array}{lr}
        (-\Delta)^s_A w +q'w=0 & \text{in } \Omega\,\;\\
        w=f & \text{in } \Omega_e
        \end{array}\right. \;.		 
\end{equation*}

\noindent \emph{Moreover, the following formula holds for all $w\in H^s(\mathbb R^n)$:}
$$ \mathrm C^s_{\gamma,A} (\gamma^{-1/2}w) + q\gamma^{-1/2}w = \gamma^{1/2} \Big((-\Delta)^s_A +q'\Big)w\,. $$
\end{Lem}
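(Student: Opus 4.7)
The pointwise operator identity
\begin{equation*}
\mathrm C^s_{\gamma,A}(\gamma^{-1/2}w) + q\gamma^{-1/2}w = \gamma^{1/2}\bigl((-\Delta)^s_A + q'\bigr)w \qquad (\star)
\end{equation*}
is the main content: once it is established, the biconditional for the Dirichlet problems follows immediately. Indeed, in the sense of \cite{Co18} a conductivity satisfies $\gamma>0$ on $\mathbb R^n$ and $\gamma\equiv 1$ on $\Omega_e$, so multiplication by $\gamma^{1/2}$ is an invertible self-map of $H^s(\mathbb R^n)$ preserving $\tilde H^s(\Omega)$. Therefore $u-g\in\tilde H^s(\Omega)$ iff $w-f=\gamma^{1/2}(u-g)\in\tilde H^s(\Omega)$, and by $(\star)$ together with $\gamma^{1/2}>0$ in $\Omega$, the FMCE vanishes in $\Omega$ iff the FMSE with the transformed potential does.

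\textbf{The fractional Leibniz identity.} The heart of $(\star)$ is the algebraic identity
\begin{equation*}
\Theta(x,y)\,\nabla^s(\gamma^{-1/2}w)(x,y) = \gamma^{1/2}(x)\,\nabla^s w(x,y) - w(x)\,\nabla^s(\gamma^{1/2})(x,y),
\end{equation*}
obtained by writing $\sqrt{\gamma(x)\gamma(y)}\bigl(\gamma^{-1/2}(x)w(x)-\gamma^{-1/2}(y)w(y)\bigr) = \gamma^{1/2}(x)(w(x)-w(y)) + (\gamma^{1/2}(y)-\gamma^{1/2}(x))w(x)$ and using $\nabla^s u=(u(x)-u(y))\alpha$. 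Incorporating the magnetic contribution $\Theta\cdot A(x,y)\gamma^{-1/2}(x)w(x)=\gamma^{1/2}(y)A(x,y)w(x)$ then gives
\begin{equation*}
\Theta\,\nabla^s_A(\gamma^{-1/2}w) = \gamma^{1/2}(x)\,\nabla^s w - w(x)\,\nabla^s(\gamma^{1/2}) + \gamma^{1/2}(y)\,A\,w(x).
\end{equation*}

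\textbf{Assembling $(\star)$.} Applying $(\nabla\cdot)^s_A$ to the three summands above, using that it is the adjoint of $\nabla^s_A$, and comparing with the explicit expansion of $(-\Delta)^s_A w$ from Lemma \ref{lemform1}, the first summand reproduces $\gamma^{1/2}(x)(-\Delta)^sw$ plus the $2\int A_{a\parallel}\cdot\nabla^s w\,dy$ and $(\nabla\cdot)^s A_{s\parallel}\cdot w$ pieces of $(-\Delta)^s_A w$ (the latter cancelled by the $-(\nabla\cdot)^s A_{s\parallel}$ in $q'$); the magnetic summand $\gamma^{1/2}(y)A w(x)$ yields the correction $(\nabla\cdot)^s(A\gamma^{1/2}(y))\,w/\gamma^{1/2}(x)$ together with the $|A|^2(\gamma^{1/2}(y)/\gamma^{1/2}(x)-1)$ mass term, which telescopes with $\gamma^{1/2}(x)\int|A|^2\,dy\cdot w$ to give the correct $|A|^2$ contribution; the conductivity summand $-w(x)\nabla^s(\gamma^{1/2})$, after dividing by $\gamma^{1/2}(x)$, produces the classical Liouville term $-(-\Delta)^s(\gamma^{1/2})/\gamma^{1/2}(x)$ together with $-\int\nabla^s(\gamma^{1/2})\cdot A\,dy/\gamma^{1/2}(x)$. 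Finally, $q\gamma^{-1/2}w$ on the left matches $\gamma^{1/2}(x)\cdot(q/\gamma)\cdot w$ on the right, and all summands add up exactly to the $q'$ defined in the statement.

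\textbf{Main obstacle.} The heavy lifting is in the last step: once the weight $\gamma^{1/2}(y)$ enters the magnetic factor, $A(x,y)\gamma^{1/2}(y)$ no longer admits a clean $(\cdot)_{s\parallel}/(\cdot)_{a\parallel}$ decomposition, which is why $q'$ is written with the combined object $(\nabla\cdot)^s\bigl(A\gamma^{1/2}(y)\bigr)$ rather than its symmetrised pieces. Carefully integrating by parts against arbitrary test $\nabla^s_A v$ and verifying that the $|A|^2$ contributions reassemble with the correct $\gamma^{1/2}(y)/\gamma^{1/2}(x)$ weight constitute the main bookkeeping challenge; the regularity hypotheses on $\gamma$ (as in \cite{Co18}) are used precisely to ensure that all the intermediate quantities $\nabla^s(\gamma^{1/2})$, $(-\Delta)^s(\gamma^{1/2})$, and $(\nabla\cdot)^s(A\gamma^{1/2}(y))$ are well defined as elements of the appropriate dual spaces.
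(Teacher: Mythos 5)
Your proposal is correct and takes essentially the same route as the paper: your pointwise Leibniz identity is exactly the paper's formula \eqref{prel1} (there written as an expression for $\nabla^s_A u$ before multiplying through by $\Theta = \gamma^{1/2}(x)\gamma^{1/2}(y)$), and your weak-form assembly against $\nabla^s_A v$, with the add-and-subtract of $(\nabla\cdot)^s A_{s\parallel}$ and $\int_{\mathbb R^n}|A|^2\,dy$ to reconstitute $(-\Delta)^s_A w$ via Lemma \ref{lemform1}, matches the paper's computation term by term. The only loose points are cosmetic: the $2\int_{\mathbb R^n} A_{a\parallel}\cdot\nabla^s w\,dy$ piece actually arises from combining magnetic cross terms of different summands (the $\int \nabla^s w\cdot A(x,y)\,dy$ from the $\langle\cdot,Av\rangle$ pairing with the $-\int A(y,x)\cdot\nabla^s w\,dy$ produced by integrating the conductivity-magnetic summand by parts) rather than from the first summand alone, and the equivalence of exterior conditions needs only that $\gamma^{1/2}$ is a positive multiplier on $H^s(\mathbb R^n)$ preserving $\tilde H^s(\Omega)$, not the stronger claim $\gamma\equiv 1$ on $\Omega_e$.
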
 

\begin{proof}
Let us start from some preliminary computations. One sees that
\spleqno{
\nabla^s w & = \nabla^s (\gamma^{1/2}u) = \nabla^s u + \nabla^s(mu) = \nabla^s u + m(y)\nabla^s u + u(x)\nabla^s m \\ & = \gamma^{1/2}(y) \nabla^s u + u(x)\nabla^s (\gamma^{1/2}) = \gamma^{1/2}(y) \nabla^s u + w(x)\frac{\nabla^s (\gamma^{1/2})}{\gamma^{1/2}(x)}\;, 
} \noindent from which $\nabla^s u = \frac{\nabla^s w}{\gamma^{1/2}(y)}- w(x)\frac{\nabla^s(\gamma^{1/2})}{\gamma^{1/2}(x)\gamma^{1/2}(y)}$, and eventually
\spleq{prel1}{
\nabla^s_A u = \frac{\nabla^s w}{\gamma^{1/2}(y)}- w(x)\frac{\nabla^s(\gamma^{1/2})}{\gamma^{1/2}(x)\gamma^{1/2}(y)} + A(x,y)\frac{w(x)}{\gamma^{1/2}(x)}\;.
}
\noindent By the definition of magnetic fractional divergence, if $v\in H^s(\mathbb R^n)$,
\spleqno{
\langle (\nabla\cdot)^s_A&(\Theta\cdot\nabla^s_A u),v \rangle = \langle \gamma^{1/2}(x)\gamma^{1/2}(y)\nabla^s_A u, \nabla^s_A v \rangle
\\ & = \langle \gamma^{1/2}(x)\gamma^{1/2}(y)\nabla^s_A u, \nabla^s v \rangle + \langle \gamma^{1/2}(x)\gamma^{1/2}(y)\nabla^s_A u, A v \rangle \\ & = \langle \gamma^{1/2}(x)\gamma^{1/2}(y)\nabla^s_A u, \nabla^s v \rangle + \Big\langle \int_{\mathbb R^n}\gamma^{1/2}(y)\nabla^s_A u \cdot A \,dy, \gamma^{1/2} v \Big\rangle\;.
}
\noindent Applying formula \eqref{prel1}, we get
\begin{align}
\langle (\nabla&\cdot)^s_A(\Theta\cdot\nabla^s_A u),v \rangle = \langle \gamma^{1/2}(x)\nabla^s w, \nabla^s v \rangle
+ \langle w(x) (A(x,y) \gamma^{1/2}(y) -\nabla^s(\gamma^{1/2}) ) , \nabla^s v \rangle \nonumber\\ & \;\;\;\;\; + \Big\langle \int_{\mathbb R^n}\gamma^{1/2}(y)\Big( \frac{\nabla^s w}{\gamma^{1/2}(y)}- w(x)\frac{\nabla^s(\gamma^{1/2})}{\gamma^{1/2}(x)\gamma^{1/2}(y)} + A(x,y)\frac{w(x)}{\gamma^{1/2}(x)} \Big) \cdot A \,dy, \gamma^{1/2} v \Big\rangle \; \nonumber\\ & = \langle \gamma^{1/2}(x)\nabla^s w, \nabla^s v \rangle
+ \langle w(x) (A(x,y) \gamma^{1/2}(y) -\nabla^s(\gamma^{1/2}) ) , \nabla^s v \rangle \label{mainpart} \\ & \;\;\;\;\; + \Big\langle \int_{\mathbb R^n} \Big( \nabla^s w \cdot A - w(x)\frac{\nabla^s(\gamma^{1/2}) \cdot A}{\gamma^{1/2}(x)} + |A|^2 w(x) \frac{\gamma^{1/2}(y)}{\gamma^{1/2}(x)} \Big) \,dy, \gamma^{1/2} v \Big\rangle\;.\nonumber\end{align}

\noindent We treat the resulting terms separately. For the first one, by symmetry,
\begin{align}
\langle \gamma^{1/2}&(x)\nabla^s w, \nabla^s v \rangle  = \langle \nabla^s w, \gamma^{1/2}(x)\nabla^s v \rangle = \langle \nabla^s w, \nabla^s(v\gamma^{1/2})-v(y)\nabla^s (\gamma^{1/2}) \rangle \nonumber \\ & = \langle (-\Delta)^s w, v\gamma^{1/2} \rangle - \langle \nabla^s w, v(y) \nabla^s(\gamma^{1/2}) \rangle = \langle (-\Delta)^s w, v\gamma^{1/2} \rangle - \langle \nabla^s w, v(x) \nabla^s(\gamma^{1/2}) \rangle \nonumber \\ & = \langle (-\Delta)^s w, v\gamma^{1/2} \rangle - \Big\langle \int_{\mathbb R^n} \nabla^s w\cdot \frac{\nabla^s(\gamma^{1/2})}{\gamma^{1/2}(x)}\, dy, \gamma^{1/2}v \Big\rangle\;. \label{firstpart}
\end{align}

\noindent For the second part of \eqref{mainpart}, we will compute as follows:
\begin{align}
& \langle A(x,y) \gamma^{1/2}(y) -\nabla^s(\gamma^{1/2})  , w(x)\nabla^s v \rangle = \nonumber \\ & =  \langle A(x,y) \gamma^{1/2}(y) -\nabla^s(\gamma^{1/2})  , \nabla^s (vw) - v(y)\nabla^s w \rangle \nonumber \\ & = \Big\langle (\nabla\cdot)^s\Big(A(x,y) \gamma^{1/2}(y) -\nabla^s(\gamma^{1/2}))  , vw \Big\rangle  - \Big\langle \Big(A(x,y) \gamma^{1/2}(y) -\nabla^s(\gamma^{1/2})\Big) v(y)  , \nabla^s w \Big\rangle \nonumber \\ & = \Big\langle \Big( \frac{(\nabla\cdot)^s(A \gamma^{1/2}(y))}{\gamma^{1/2}(x)} -\frac{(-\Delta)^s(\gamma^{1/2})}{\gamma^{1/2}(x)}  \Big) w(x) , v\gamma^{1/2} \Big\rangle - \Big\langle \Big(A(y,x) \gamma^{1/2}(x) -\nabla^s(\gamma^{1/2})\Big) v(x)  , \nabla^s w \Big\rangle \nonumber \\ & = \Big\langle \Big( \frac{(\nabla\cdot)^s(A \gamma^{1/2}(y))}{\gamma^{1/2}(x)} -\frac{(-\Delta)^s(\gamma^{1/2})}{\gamma^{1/2}(x)}  \Big) w(x) , v\gamma^{1/2} \Big\rangle - \label{secondpart} \\ & \;\;\;\;\; - \Big\langle \int_{\mathbb R^n}A(y,x) \cdot \nabla^s w\, dy  , v \gamma^{1/2} \Big\rangle + \Big\langle  \int_{\mathbb R^n} \frac{\nabla^s(\gamma^{1/2})}{\gamma^{1/2}(x)} \cdot \nabla^s w \, dy  , v\gamma^{1/2} \Big\rangle\;.\nonumber
\end{align}
\noindent Substituting \eqref{firstpart} and \eqref{secondpart} into \eqref{mainpart}, we conclude the proof:
\begin{align*} \langle &(\nabla\cdot)^s_A (\Theta\cdot\nabla^s_A u),v \rangle = \langle (-\Delta)^s w, v\gamma^{1/2} \rangle - \Big\langle \int_{\mathbb R^n} \nabla^s w\cdot \frac{\nabla^s(\gamma^{1/2})}{\gamma^{1/2}(x)}\, dy, \gamma^{1/2}v \Big\rangle + \\ & \;\;\;\;\; + \Big\langle \Big( \frac{(\nabla\cdot)^s(A \gamma^{1/2}(y))}{\gamma^{1/2}(x)} -\frac{(-\Delta)^s(\gamma^{1/2})}{\gamma^{1/2}(x)}  \Big) w(x) , v\gamma^{1/2} \Big\rangle - \\ & \;\;\;\;\; - \Big\langle \int_{\mathbb R^n}A(y,x) \cdot \nabla^s w\, dy  , v \gamma^{1/2} \Big\rangle + \Big\langle  \int_{\mathbb R^n} \frac{\nabla^s(\gamma^{1/2})}{\gamma^{1/2}(x)} \cdot \nabla^s w \, dy  , v\gamma^{1/2} \Big\rangle + \\ & \;\;\;\;\; + \Big\langle \int_{\mathbb R^n} \Big( \nabla^s w \cdot A - w(x)\frac{\nabla^s(\gamma^{1/2}) \cdot A}{\gamma^{1/2}(x)} + |A|^2 w(x) \frac{\gamma^{1/2}(y)}{\gamma^{1/2}(x)} \Big) \,dy, \gamma^{1/2} v \Big\rangle \\ & = \Big\langle (-\Delta)^s w + 2 \int_{\mathbb R^n}A_{a\parallel} \cdot \nabla^s w\, dy  + w(x)\Big(\int_{\mathbb R^n} |A|^2\, dy + (\nabla\cdot)^s A_{s\parallel} \Big) , v\gamma^{1/2} \Big\rangle + \\ & \;\;\;\;\; + \Big\langle \left\{ - (\nabla\cdot)^s A_{s\parallel} + \frac{(\nabla\cdot)^s(A \gamma^{1/2}(y))}{\gamma^{1/2}(x)} -\frac{(-\Delta)^s(\gamma^{1/2})}{\gamma^{1/2}(x)}+ \right.  \\ & \;\;\;\;\; \;\;\;\;\;\left.+ \int_{\mathbb R^n} \Big( - \frac{\nabla^s(\gamma^{1/2}) \cdot A}{\gamma^{1/2}(x)} + |A|^2 \Big(\frac{\gamma^{1/2}(y)}{\gamma^{1/2}(x)}-1\Big) \Big) \,dy \right\} w(x) , v\gamma^{1/2} \Big\rangle \\ & = \langle (-\Delta)^s_A w +     (q'-q/\gamma)w, v\gamma^{1/2} \rangle\;. \qedhere\end{align*}\end{proof} 

\noindent Thus the FMCEs can be reduced to FMSEs; hence, we know that FMCE enjoys the same gauges as FMSE, and most importantly we can consider and solve an analogous inverse problem. 
\vspace{3mm}

\noindent {\bf Acknowledgements.} This work is part of the PhD research of the author, who was partially supported by the European Research Council under Horizon 2020 (ERC CoG 770924). The author wishes to express his sincere gratitude to Professor Mikko Salo for his reliable guidance and constructive discussion in the making of this work.

\bibliography{Biblio}

\end{document}